\newcommand\mapsfrom{\mathrel{\reflectbox{\ensuremath{\mapsto}}}}
\newcommand{\nc}{\newcommand}
\nc{\G}{{\Gamma}} \nc{\BC}{{\mathbb C}} \nc{\BQ}{{\mathbb Q}}
\nc{\BR}{{\mathbb R}} \nc{\BZ}{{\mathbb Z}} \nc{\BP}{{\mathbb P}}
\nc{\BN}{{\mathbb N}} \nc{\BM}{{\mathbb M}}
\nc{\fH}{{\mathbb H}}
\nc{\U}{{\mathcal U}}
\nc{\PS}{{\mbox{PSL}_2(\BZ)}} \nc{\SL}{{\mbox{SL}_2(\BZ)}}
\nc{\SR}{{\mbox{SL}_2(\BR)}} \nc{\PR}{{\mbox{PSL}_2(\BR)}}
\nc{\GL}{{\mbox{GL}}} \nc{\PQ}{{\mbox{PGL}_2^+(\BQ)}}
\nc{\GR}{{\mbox{GL}_2^+(\BR)}} \nc{\PG}{{\mbox{PGL}_2^+(\BR)}}
\nc{\GC}{{\mbox{GL}_2(\BC)}}
\nc{\f}{{\mathcal{F}(\fH)}}
\nc{\Cc}{\widehat{\BC}}
\nc{\e}{{E_{\rho}(\G)}}
\nc{\g}{{\gamma}}
\nc{\vm}{{V_{\rho}(\G)}}
\nc{\oo}{{\mathcal O}}
\nc{\M}{{\mbox{M}}}
\nc{\om}{{\omega}}
\nc{\Om}{{\Omega}}
\nc{\TX}{{\widetilde{X}}}
\nc{\ol}{\overline}
\nc{\cl}{{\mathcal L}}
\nc{\ce}{{\mathcal E}}
\nc{\la}{{\lambda}}
\nc{\La}{{\Lambda}}
\nc{\cz}{{\mathcal Z}}
\newtheorem{numbered}{}[section]
\newtheorem{thm}[numbered]{Theorem}
\newtheorem{remark}[numbered]{Remark}
\newtheorem{prop}[numbered]{Proposition}
\numberwithin{equation}{section}
\newcommand{\thmref}[1]{Theorem~\ref{#1}}
\newcommand{\propref}[1]{Proposition~\ref{#1}}
\newcommand{\secref}[1]{\S\ref{#1}}
\begin{document}

\title{Elliptic Zeta functions and equivariant functions}
\author[]{Abdellah Sebbar} \author[]{Isra Al-Shbeil} 
\address{Department of Mathematics and Statistics, University of Ottawa, Ottawa Ontario K1N 6N5 Canada}
\email{asebbar@uottawa.ca}
\email{ialsh010@uottawa.ca}

\subjclass[2000]{11F12, 35Q15, 32L10}

\begin{abstract}
	In this paper we establish a close connection between three notions attached to a modular subgroup. Namely the set of weight two meromorphic modular forms, the set of equivariant functions on the upper half-plane commuting with the action of the modular subgroup and the set of elliptic zeta functions generalizing the Weierstrass zeta functions. In particular, we show that the equivariant functions can be parameterized by modular objects as well as by elliptic objects.
\end{abstract}
\maketitle
\section{Introduction}
For a finite index subgroup $\Gamma$ of $\SL$, an equivariant function is a meromorphic function on the upper half-plane $\fH$ which commutes with the action of $\Gamma$ on $\fH$. Namely,
\[
f(\gamma \tau)=\gamma f(\tau)\;,\ \gamma\in\Gamma\;,\ \tau\in\fH,
\]
where $\gamma$ acts by linear fractional transformations on both sides.
These were extensively studied in connection with modular forms in \cite{sb1,sb2,ss1} and have important applications to modular forms and vector-valued modular forms
\cite{ss3,ss2}. In this paper, we study the equivariant functions from an elliptic point of view. In particular, we will see  that they arise also from elliptic objects. To this end we establish
correspondences between three distinct notions. The first of which is the set of equivariant functions for $\Gamma$. The second is the space of weight 2 meromorphic modular forms $M_2(\Gamma)$.
The third set under consideration consists of a generalization of the Weierstrass $\zeta-$function which satisfies $\zeta'(z)=-\wp(z)$ where $\wp$ is the Weierstrass $\wp-$function
attached to a rank two lattice of $\BC$. In fact, $\zeta$ can be viewed as map
\[
\zeta:\,\{\mbox{set of lattices in } \BC\}\times \BC\mapsto \BC\cup\{\infty\}.
\]
For a fixed lattice $\omega_1\BZ+\omega_2\BZ$ with $\Im(\omega_2/\omega_1)>0$, the
map $\zeta(\omega_1\BZ+\omega_2\BZ,\;\cdot\;)$ is quasi-periodic in the sense that
\[
\zeta(\omega_1\BZ+\omega_2\BZ,z+\omega)=\zeta(\omega_1\BZ+\omega_2\BZ,z)+H(\omega),
\ z\in\BC,\ \omega\in \omega_1\BZ+\omega_2\BZ.
\]
Here $H(\omega$) does not depend on $z$ and is referred to as the quasi-period map. It is also $\BZ-$linear and thus it is completely determined by the quasi-periods
$H(\omega_1)$ and $H(\omega_2)$. Moreover, $\zeta$ is homogeneous in the sense that
\[
\zeta(\alpha(\omega_1\BZ+\omega_2\BZ),\alpha z)=\alpha^{-1}\zeta(\omega_1\BZ+\omega_2\BZ,z), \ \alpha\in\BC^*
\]
and so is the quasi-period map $H(\omega)$.

In the particular case where the lattice is of the form $\BZ+\tau \BZ$, $\tau\in\fH$,
 $H(1)$ and $H(\tau)$ are meromorphic as functions of $\tau$.
It turns out that the quotient $H(\tau)/H(1)$
 is an equivariant function on $\fH$ thanks to the linearity and the homogeneity of the quasi-period map $H$ \cite{brady}.

To generalize the Weierstrass $\zeta-$function, Brady, in {\em loc. cit.}
gave the definition of zeta-type functions that behave like $\zeta$ in terms of quasi-periodicity,  homogeneity, meromorphic behavior of the quasi-periods $H(1)$ and $H(\tau)$ and additional conditions.

In our case, we adapt and simplify these maps which we call elliptic zeta functions.
The quasi-periods $H(1)$ and $H(\tau)$ turn out to hold important information and they are used to construct equivariant functions as well as elements of $M_2(\SL)$.

If $\Gamma$ is a finite index subgroup of $\SL$, we generalize the above construction by
defining the notion of $\Gamma-$elliptic zeta functions. Here the lattices are replaced by appropriate classes involving $\Gamma$ which essentially can be identified with pairs of lattices $(L,L')$, $L'$ being a sub-lattice of finite index of $L$. The group $\SL$ acts by automorphisms on $L$ by change of basis and $\Gamma$ becomes the subgroup of $\SL$ that leaves $L'$ invariant. We then establish a triangular correspondence between the set of $\Gamma-$elliptic zeta functions, $M_2(\Gamma)$
and the set of $\Gamma-$equivariant functions summarized in the following commutative diagram in which every arrow is surjective.

\[
\begin{tikzcd}[row sep=3em, column sep=0.01em]
& \mbox{Weight 2 modular forms} \arrow{dr}{} \\
\mbox{Elliptic zeta functions} \arrow{ur}{} \arrow{rr}{} && \mbox{Equivariant functions}
\end{tikzcd}
\]

This paper is organized as follows:

In \secref{section2} we review the basic notions of periodic and quasi-periodic functions in the context of the Weierstrass $\wp$ and $\zeta$ functions. In \secref{section3}, inspired by M. Brady \cite{brady}, we introduce the notion of elliptic zeta functions and
study their structure. In \secref{section4}, we establish the connection between weight 2 modular forms and the elliptic zeta functions. In section \secref{section5},
we review the notion of equivariant functions and establish a correspondence with the
weight two modular forms. In \secref{section6} we generalize the constructions of the previous sections to any finite index subgroup of $\SL$. Finally, in \secref{section7} we provide some interesting examples related to the powers of the Weierstrass $\wp-$function.

\section{Quasi-periodic functions}\label{section2}
The main reference in this section is \cite{lang}.
Let $\La\subset \BC$ be a lattice in $\BC$, that is $\La=\omega_1\,\BZ \,+\,\omega_2\,\BZ$ with
$\displaystyle \Im (\omega_2/\omega_1)>0$. Such lattice can be expressed with a different basis $(\omega'_1,\omega'_2)$ if  $\omega'_1=a\,\omega_1+b\,\omega_2$ and $\omega'_2=c\,\omega_1+d\,\omega_2$ with
$\displaystyle \gamma=\begin{bmatrix}
a&b\\c&d
\end{bmatrix}\in\SL$, that is $(\omega'_1,\omega'_2)=(\omega_1,\omega_2)\gamma^t$ where $\gamma^t$ denotes the transpose of the matrix $\gamma$.
The Weierstrass $\wp-$function is the elliptic function with respect to $\La$  given by:
\[
\wp(\La,z)\,=\,\frac{1}{z^2}\,+\, \sum_{\substack{\omega\in \La\\ \omega\neq 0}}\,\left(\frac{1}{(z-w)^2}\,-\,\frac{1}{\omega^2}\right).
\]
It is absolutely and uniformly convergent on compact subsets of $\BC\setminus \La$ and defines a meromorphic function on $\BC$ with poles of order 2 at the points of $\La$ and no other poles.

The Weierstrass $\zeta-$function is defined by the series
\begin{equation}\label{zeta}
\zeta(\La,z)\,=\,\frac{1}{z}\,+\, \sum_{\substack{\omega\in \La\\ \omega\neq 0}}\,\left(\frac{1}{z-w}\,+\frac{1}{\omega}\,+\,\frac{z}{\omega^2}\right).
\end{equation}
It is absolutely and uniformly convergent on compact subsets of $\BC\setminus \La$. Moreover, it defines a meromorphic function on $\BC$ with simple poles at the points of $\La$ and no other poles. Differentiating the above series we get for all $z\in\BC$:
\[
\frac{d}{dz}\zeta(\La,z)\,=\,-\wp(\La,z)\;.
\]
Since $\wp$ is periodic relative to $\La$, $\zeta$ is quasi-periodic in the sense that for all $\omega\in\La$ and for all $z\in\BC$, we have
\begin{equation}\label{quasi}
\zeta(\La,z+\omega)\,=\,\zeta(\La,z)\,+\,\eta_{\La}(\omega),
\end{equation}
where $\eta_{\La}(\omega)$ is independent of $z$. We call $\eta_{\La}:\La\longrightarrow\BC$
the quasi-period map associated to $\zeta$. It is clear that $\eta_{\La}$ is $\BZ-$linear and thus it is completely determined by the values of
 $\eta_{\La}(\omega_1)$ and $\eta_{\La}(\omega_2)$.
 Also, since $\zeta$ is an odd function, it follows that if $\omega\in\La$ and
 $\omega\notin 2\La$, then $\eta_{\La}(\omega)$ is given by
 \begin{equation}\label{expr1}
 \eta_{\La}(\omega)\,=\,2\zeta(\Lambda,\frac{\omega}{2})\;.
 \end{equation}
 The periods and the quasi-periods are related by the Legendre relation:
 \begin{equation}\label{legendre}
 \omega_1\eta_{\La}(\omega_2)-\omega_2\eta_{\La}(\omega_1)=2\pi i.
 \end{equation}
 The following homogeneity property of $\zeta$ and $\eta$ will be very useful.
 \begin{prop}\label{prop2.1}
 	If $\La$ is a lattice and $\alpha\in\BC$ then
 	\begin{equation}\label{hom1}
\zeta(\alpha\La,\alpha z)\,=\,\alpha^{-1}\zeta(\La,z) \quad \mbox{and} \quad 	
\eta_{\alpha\La}(\alpha\omega)\,=\,\alpha^{-1}\eta_{\La}(\omega)\;.
 	\end{equation}
 \end{prop}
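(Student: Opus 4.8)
The plan is to establish the first identity directly from the series definition \eqref{zeta} and then to deduce the second from it together with the quasi-periodicity relation \eqref{quasi}.

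For the first identity, I would write out $\zeta(\alpha\La,\alpha z)$ using \eqref{zeta}, noting that the summation now runs over the nonzero points of the lattice $\alpha\La$. Since multiplication by $\alpha\in\BC^*$ is a bijection from $\La\setminus\{0\}$ onto $\alpha\La\setminus\{0\}$, I would reindex the sum by substituting $\omega'=\alpha\omega$ with $\omega\in\La$, $\omega\neq 0$. Each summand then becomes $\frac{1}{\alpha z-\alpha\omega}+\frac{1}{\alpha\omega}+\frac{\alpha z}{\alpha^2\omega^2}=\alpha^{-1}\left(\frac{1}{z-\omega}+\frac{1}{\omega}+\frac{z}{\omega^2}\right)$, and the leading term $1/(\alpha z)$ is likewise $\alpha^{-1}(1/z)$. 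Factoring out $\alpha^{-1}$ recovers exactly the series for $\zeta(\La,z)$, so that $\zeta(\alpha\La,\alpha z)=\alpha^{-1}\zeta(\La,z)$. The one point requiring care is the legitimacy of this reindexing, which is justified by the absolute and uniform convergence of \eqref{zeta} on compact subsets of $\BC\setminus\La$, permitting rearrangement of the terms.

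For the second identity, I would combine the first identity with quasi-periodicity. Fix $\omega\in\La$ and $z\in\BC$. Applying \eqref{quasi} to the lattice $\alpha\La$ at the point $\alpha z$ with period $\alpha\omega\in\alpha\La$ gives $\zeta(\alpha\La,\alpha z+\alpha\omega)=\zeta(\alpha\La,\alpha z)+\eta_{\alpha\La}(\alpha\omega)$. On the other hand, the first identity gives $\zeta(\alpha\La,\alpha(z+\omega))=\alpha^{-1}\zeta(\La,z+\omega)$, and \eqref{quasi} for $\La$ gives $\zeta(\La,z+\omega)=\zeta(\La,z)+\eta_{\La}(\omega)$. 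Comparing these two expressions for $\zeta(\alpha\La,\alpha z+\alpha\omega)$ and cancelling the common term $\zeta(\alpha\La,\alpha z)=\alpha^{-1}\zeta(\La,z)$ yields $\eta_{\alpha\La}(\alpha\omega)=\alpha^{-1}\eta_{\La}(\omega)$, as desired.

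Since both steps reduce to elementary algebraic manipulation, I do not anticipate any serious obstacle; the only subtlety is the justification of the term-by-term reindexing in the first step, which follows from the stated absolute convergence of the defining series.
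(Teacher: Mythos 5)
Your proposal is correct and follows exactly the paper's (very terse) proof: the first identity from reindexing the defining series \eqref{zeta}, and the second by comparing the quasi-periodicity relation \eqref{quasi} for $\alpha\La$ with the homogeneity just proved. Your added care about absolute convergence justifying the reindexing, and the implicit restriction to $\alpha\in\BC^*$, are sound fillings-in of details the paper leaves unstated.
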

\begin{proof}
The first relation follows from the expansion \eqref{zeta} and the second relation follows from \eqref{quasi}
	\end{proof}
We refer to \eqref{hom1} by saying that $\zeta$ and $\eta$ are homogeneous of weight -1.

We now focus on lattices of the form $\La_{\tau}\;=\;\BZ+\tau\BZ$ where $\tau$ is in the upper half-plane $\fH=\{z\in \BC \mid \Im (z)>0\}$. From \eqref{expr1} we can readily see that the quasi-periods
$\eta_{\La_{\tau}}(1)$ and $\eta_{\La_{\tau}}(\tau)$ are meromorphic functions on $\fH$ and so is the function defined by
\begin{equation}\label{equiv1}
h(\tau)\,=\,\frac{\eta_{\La_{\tau}}(\tau)}{\eta_{\La_{\tau}}(1)}\,.
\end{equation}
For the remainder of this paper, if $\displaystyle \gamma=\begin{bmatrix}
	a&b\\c&d
\end{bmatrix}\in\GC$ and $z\in\BC$ we define the action $\gamma z$ by
\begin{equation}
\gamma z\,=\,\frac{az+b}{cz+d}\,.
\end{equation}
When $\gamma\in\SR$ and $z\in\fH$, then this is the usual action on $\fH$ by linear fractional transformation providing all the automorphisms of $\fH$.
\begin{prop}\cite{brady}
The function $h$ defined by \eqref{equiv1} satisfies
\begin{equation}\label{equiv}
	h(\gamma \tau)\,=\,\gamma h(\tau)\,  ,\  \gamma\in\SL\, ,\ \tau\in\fH.
\end{equation}
\end{prop}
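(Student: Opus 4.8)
The plan is to reduce the equivariance property entirely to the two structural features of the quasi-period map already recorded: its $\BZ$-linearity and its weight $-1$ homogeneity from \propref{prop2.1}. Throughout, write $\gamma=\begin{bmatrix} a&b\\c&d\end{bmatrix}\in\SL$ and set $j=c\tau+d$, so that $\gamma\tau=(a\tau+b)/j$.

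First I would establish the key lattice identity $\La_{\gamma\tau}=j^{-1}\La_{\tau}$. Multiplying the basis of $\La_{\gamma\tau}=\BZ+(\gamma\tau)\BZ$ by $j$ produces the pair $(c\tau+d,\,a\tau+b)$; these are integral combinations of $\{1,\tau\}$ with change-of-basis matrix $\gamma$ of determinant $1$, so they form another $\BZ$-basis of $\La_{\tau}$. Hence $j\La_{\gamma\tau}=\La_{\tau}$, that is $\La_{\gamma\tau}=j^{-1}\La_{\tau}$.

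Next I would apply the homogeneity relation $\eta_{\alpha\La}(\alpha\omega)=\alpha^{-1}\eta_{\La}(\omega)$ with $\La=\La_{\tau}$ and $\alpha=j^{-1}$. Writing $1=j^{-1}(c\tau+d)$ and $\gamma\tau=j^{-1}(a\tau+b)$ puts both arguments in the required form $\alpha\omega$ with $\omega\in\La_{\tau}$, giving
\[
\eta_{\La_{\gamma\tau}}(1)=j\,\eta_{\La_{\tau}}(c\tau+d),\qquad \eta_{\La_{\gamma\tau}}(\gamma\tau)=j\,\eta_{\La_{\tau}}(a\tau+b).
\]
The $\BZ$-linearity of $\eta_{\La_{\tau}}$ then lets me expand $\eta_{\La_{\tau}}(a\tau+b)=a\,\eta_{\La_{\tau}}(\tau)+b\,\eta_{\La_{\tau}}(1)$ and likewise $\eta_{\La_{\tau}}(c\tau+d)=c\,\eta_{\La_{\tau}}(\tau)+d\,\eta_{\La_{\tau}}(1)$.

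Finally I would form the quotient $h(\gamma\tau)=\eta_{\La_{\gamma\tau}}(\gamma\tau)/\eta_{\La_{\gamma\tau}}(1)$. The common factor $j$ cancels, and dividing numerator and denominator by $\eta_{\La_{\tau}}(1)$ turns the expression into $\dfrac{a\,h(\tau)+b}{c\,h(\tau)+d}=\gamma h(\tau)$, which is precisely \eqref{equiv}. The only step requiring genuine care is the lattice identity: one must invoke $\det\gamma=1$ to guarantee that $(c\tau+d,\,a\tau+b)$ spans all of $\La_{\tau}$ and not merely a proper sublattice, since otherwise the homogeneity relation would apply to the wrong lattice. Everything else is a formal manipulation of the linearity and homogeneity identities.
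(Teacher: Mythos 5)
Your proof is correct and takes essentially the same route as the paper: the weight $-1$ homogeneity of $\eta$ combined with its $\BZ$-linearity, with the common factor $c\tau+d$ cancelling in the quotient defining $h$. The only difference is presentational: you isolate the lattice identity $\La_{\gamma\tau}=(c\tau+d)^{-1}\La_{\tau}$ (with the explicit appeal to $\det\gamma=1$) as a preliminary step, whereas the paper folds the same observation into the two displayed computations of $\eta_{\La_{\gamma\tau}}(\gamma\tau)$ and $\eta_{\La_{\gamma\tau}}(1)$.
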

\begin{proof}
	Let $\gamma\in\SL$ and $\tau\in\fH$. We have
	\begin{align*}
	\eta_{\La_{\g\tau}}(\g\tau)\,&=\,\eta_{\BZ+\frac{a\tau+b}{c\tau+d}\BZ}\left(\frac{a\tau+b}{c\tau+d}\right)\\
	&=(c\tau+d)\eta_{(c\tau+d)\BZ+(a\tau+b)\BZ}(a\tau+b)\  \ (\mbox{by homogeneity of }\eta)\\
	&=(c\tau+d)\eta_{\La_{\tau}}(a\tau+b)\\
	&=(c\tau+d)(a\eta_{\La_{\tau}}(\tau)+b\eta_{\La_{\tau}}(1))
	\end{align*}
where we have used the linearity of $\eta$ and the fact that the lattices
 $\La_{\tau}$ and $(c\tau+d)\BZ+(a\tau+b)\BZ$ are the same. Also,
 \begin{align*}
 \eta_{\La_{\g\tau}}(1)\,&=\,	(c\tau+d)\eta_{(c\tau+d)\BZ+(a\tau+b)\BZ}(c\tau+d)\  \ (\mbox{by homogeneity of }\eta)\\
 &=(c\tau+d)\eta_{\La_{\tau}}(c\tau+d)\\
 &=(c\tau+d)(c\eta_{\La_{\tau}}(\tau)+d\eta_{\La_{\tau}}(1)).
 	\end{align*}
 Therefore,
 \[
 h(\g\tau)=\frac{a\eta_{\La_{\tau}}(\tau)+b\eta_{\La_{\tau}}(1)}{c\eta_{\La_{\tau}}(\tau)+d\eta_{\La_{\tau}}(1)}=\frac{ah(\tau)+b}{ch(\tau)+d}=\g h(\tau)\,.
 \]
	\end{proof}

A meromorphic function on $\fH$ that satisfies \eqref{equiv} will be called equivariant with respect to $\SL$. We will expand more on these functions in later sections.

\section{Elliptic Zeta functions}\label{section3}
Following \cite{brady}, we will generalize the notion of Weierstrass zeta function and its quasi-periods. Let $\cl $ be the set of lattices $\La_{(\omega_1,\omega_2)}=\omega_1\BZ+\omega_2\BZ$ with $\Im(\omega_2/\omega_1>0)$. We define an elliptic zeta function of weight $k\in\BZ$ as a map
\[
\cz:\cl\times\BC\longrightarrow \BC\cup\{\infty\}
\]
satisfying the following properties:
\begin{enumerate}
	\item \label{c1} For each ${\La}=\omega_1\BZ+\omega_2\BZ$, the map
	\[
	\cz(\La,\cdot):\BC\longrightarrow \BC\cup \{\infty\}
	\]
	is  quasi-periodic, that is
	\[
	\cz(\La,z+\om)\,=\,\cz(\La,z)+H_{\La}(\om)\;,\ z\in\BC\;,\ \omega\in \La\;,
	\]
	where the quasi-period function $H_{\La}(\om)$ does not depend on $z$.
	\item \label{c2} $\cz$ is homogeneous of weight $k$ in the sense that
	\[
	\cz(\alpha \La, \alpha z)\,=\,\alpha^{k}\;\cz(\La,z)\;,\ \alpha\in\BC^*\;,\ z\in\BC.
	\]
	\item \label{c3} If $\La_{\tau}=\BZ+\tau\BZ$, $\tau\in\fH$, then the quasi-periods $H_{\La_{\tau}}(\tau)$ and $H_{\La_{\tau}}(1)$ as functions of $\tau$ are meromorphic on $\fH$.
\end{enumerate}

It follows from (\ref{c1}) that for each $\La$, the quasi-period function $H_{\La}$ is $\BZ-$linear, and therefore it is completely determined by $H_{\La}(\omega_1)$ and $H_{\La}(\omega_2)$. Moreover, we have the following result  generalizing \propref{prop2.1}.
\begin{prop}
	Let $\cz$ be an elliptic zeta function of weight $k$ et let $H_{\La}$ be the quasi-period function for each lattice $\La$. Then for all  $\alpha\in\BC^\times$ and $\omega\in \La$, we have
	\begin{equation}\label{hom2.3}
	H_{\alpha \La}(\alpha \omega)\,=\, \alpha^{k}\;H_{\La}(\omega).
	\end{equation}
\end{prop}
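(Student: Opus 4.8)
The plan is to mimic the proof of \propref{prop2.1}, playing the quasi-periodicity and homogeneity properties off against each other. Since $\alpha\in\BC^\times$ and $\omega\in\La$, the element $\alpha\omega$ lies in the lattice $\alpha\La$, so quasi-periodicity (property (\ref{c1})) applied to the lattice $\alpha\La$ at the period $\alpha\omega$ gives, for every $z\in\BC$,
\[
\cz(\alpha\La,\, z+\alpha\omega)\,=\,\cz(\alpha\La,\, z)\,+\,H_{\alpha\La}(\alpha\omega).
\]

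Next I would substitute $z=\alpha w$, which is harmless because scaling by $\alpha\neq 0$ is a bijection of $\BC$, and rewrite both sides using homogeneity of weight $k$ (property (\ref{c2})). On the left, $\cz(\alpha\La,\,\alpha(w+\omega))=\alpha^{k}\,\cz(\La,\,w+\omega)$, and I then expand $\cz(\La,\,w+\omega)$ by quasi-periodicity for $\La$ itself into $\cz(\La,\,w)+H_{\La}(\omega)$. On the right, $\cz(\alpha\La,\,\alpha w)=\alpha^{k}\,\cz(\La,\,w)$. Equating the two expressions yields
\[
\alpha^{k}\bigl(\cz(\La,\,w)+H_{\La}(\omega)\bigr)\,=\,\alpha^{k}\,\cz(\La,\,w)\,+\,H_{\alpha\La}(\alpha\omega).
\]

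The term $\alpha^{k}\,\cz(\La,\,w)$ cancels on both sides, leaving exactly \eqref{hom2.3}. There is no real obstacle here; the only points deserving a word of justification are that $\alpha\omega\in\alpha\La$, so that quasi-periodicity for $\alpha\La$ may legitimately be invoked at the period $\alpha\omega$, and that the computed identity is valid for all $z$ since every $z\in\BC$ is of the form $\alpha w$ with $\alpha$ invertible. The argument uses both defining properties of an elliptic zeta function simultaneously; notably the $\BZ$-linearity of $H_{\La}$ is not needed, so the conclusion holds verbatim for each individual period $\omega$ rather than merely for the basis vectors $\omega_1,\omega_2$.
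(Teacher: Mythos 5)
Your proof is correct and is essentially the paper's own argument: both compute $\cz(\alpha\La,\alpha(z+\omega))$ in two ways, once via quasi-periodicity for $\alpha\La$ combined with homogeneity, and once via homogeneity followed by quasi-periodicity for $\La$, then cancel the common term $\alpha^{k}\cz(\La,z)$. Your added remarks (that $\alpha\omega\in\alpha\La$, that the substitution $z=\alpha w$ is bijective, and that $\BZ$-linearity of $H_{\La}$ is never used) are accurate but do not change the substance.
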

\begin{proof}
	On one hand we have:
	\begin{align*}
	\cz(\alpha \La,\alpha(z+\omega))\,&=\,\cz(\alpha \La, \alpha z)+H_{\alpha \La}(\alpha \omega)\\
	&=\,\alpha^{k}\cz(\La,z)+H_{\alpha \La}(\alpha \omega).
	\end{align*}
On the other hand, we have
\begin{align*}
	\cz(\alpha \La,\alpha(z+\omega))\,&=\,\alpha^{k}\cz(\La,z+\omega)\\
	&=\,\alpha^{k}(\cz(\La,z)+H_{\La}(\omega)),
	\end{align*}
and the proposition follows.
	\end{proof}
Notice that two elliptic zeta functions having the same quasi-period function must differ by an elliptic function. Simple examples are given by the identity map $z$, or the Weierstrass zeta function $\zeta(\La,z)$. We will see below that these two examples will, in a certain sense, generate all the other elliptic zeta functions.
Also, since for a fixed lattice the derivative of an elliptic zeta function with respect to $z$ is an elliptic function for the lattice, this provides a way to construct infinitely many of them by taking integrals of elliptic functions.

Let  $\omega_1$ and $\omega_2$ such that $\Im(\omega_2/\omega_1)>0$ and set
\[
M_{(\omega_1,\omega_2)}=\begin{bmatrix}
\omega_2 &\eta(\omega_2)\\
\omega_1& \eta(\omega_1)
\end{bmatrix},
\]
where $\eta$ is the quasi-period map of the Weierstrass zeta function
$\zeta(\omega_1\BZ+\omega_2\BZ,z)$
Using the Legendre relation \eqref{legendre}, we have
\[
 \det M_{(\omega_1,\omega_2)}=-2\pi i.
\]
Let $\cz$ be an elliptic zeta function of weight $k$ with the two quasi-periods $H(\omega_1)$ and $H(\omega_2)$. Set
\[
\begin{bmatrix}
\Phi\\ \Psi\,
\end{bmatrix}
=M_{(\omega_1,\omega_2)}^{-1}
\begin{bmatrix}
H(\omega_2)\\ H(\omega_1)
\end{bmatrix}.
\]
In other words
\begin{align}
	2\pi i\Phi&=\eta(\omega_2) H(\omega_1)- \eta(\omega_1) H(\omega_2)  \label{leg1}\\
	2\pi i\Psi&=\omega_1H(\omega_2)-\omega_2H(\omega_1) \label{leg2}.
	\end{align}
\begin{prop}\label{indep}
The quantities $\Phi$ and $\Psi$ do not depend on the choice of the basis $(\omega_1,\omega_2)$, and, as functions of the lattice $\omega_1\BZ+\omega_2\BZ$, they are homogeneous of respective weights $k-1$ and $k+1$.
\end{prop}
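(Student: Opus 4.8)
The plan is to treat the two assertions in order, proving basis-independence first, because the homogeneity statement only makes sense once $\Phi$ and $\Psi$ are seen to depend on the lattice alone. I would fix a change of basis $\g=\begin{bmatrix} a & b\\ c & d\end{bmatrix}\in\SL$, so that $\om_1'=a\om_1+b\om_2$ and $\om_2'=c\om_1+d\om_2$ span the same lattice with the same orientation; restricting to $\SL$ (rather than $\mbox{GL}_2(\BZ)$) is exactly the orientation-preserving change-of-basis discussion of \secref{section2}, and it keeps the determinant equal to $1$. Using the $\BZ$-linearity of the Weierstrass quasi-period map $\eta$ and of $H_{\La}$, I expect both the matrix $M_{(\om_1',\om_2')}$ and the column $\begin{bmatrix} H(\om_2')\\ H(\om_1')\end{bmatrix}$ to be obtained from their $(\om_1,\om_2)$-counterparts by left multiplication by the \emph{same} integer matrix $P=\begin{bmatrix} d & c\\ b & a\end{bmatrix}$, which has $\det P=ad-bc=1$. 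Granting this, the defining relation closes at once:
\[
\begin{bmatrix}\Phi'\\ \Psi'\end{bmatrix}=\bigl(P\,M_{(\om_1,\om_2)}\bigr)^{-1}P\begin{bmatrix} H(\om_2)\\ H(\om_1)\end{bmatrix}=M_{(\om_1,\om_2)}^{-1}P^{-1}P\begin{bmatrix} H(\om_2)\\ H(\om_1)\end{bmatrix}=\begin{bmatrix}\Phi\\ \Psi\end{bmatrix},
\]
so the copies of $P$ cancel and $\Phi,\Psi$ are independent of the basis.

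For the homogeneity I would exploit this basis-independence by computing with the scaled basis $(\alpha\om_1,\alpha\om_2)$ of the lattice $\alpha\La$. Substituting into \eqref{leg1} and \eqref{leg2} and invoking $\eta_{\alpha\La}(\alpha\om_i)=\alpha^{-1}\eta_{\La}(\om_i)$ from \eqref{hom1} together with $H_{\alpha\La}(\alpha\om_i)=\alpha^{k}H_{\La}(\om_i)$ from \eqref{hom2.3}, each term of \eqref{leg1} acquires the factor $\alpha^{-1}\alpha^{k}=\alpha^{k-1}$ while each term of \eqref{leg2} acquires $\alpha\,\alpha^{k}=\alpha^{k+1}$. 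Factoring these out of $2\pi i\,\Phi(\alpha\La)$ and $2\pi i\,\Psi(\alpha\La)$ gives $\Phi(\alpha\La)=\alpha^{k-1}\Phi(\La)$ and $\Psi(\alpha\La)=\alpha^{k+1}\Psi(\La)$, the asserted weights $k-1$ and $k+1$.

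The two linearity expansions and the substitution of powers of $\alpha$ are routine, and I expect no trouble there. The only genuine obstacle is the bookkeeping behind the claim that $M_{(\om_1',\om_2')}$ and the transformed column are conjugated by the \emph{same} matrix $P$: this hinges on matching the row convention (with $(\om_2,\eta(\om_2))$ on top of $M$ and $H(\om_2)$ on top of the column), and any discrepancy in the ordering would leave a leftover transposition that would spoil the cancellation. Once one checks directly that both objects transform by $P$ on the left, the rest of the argument is purely formal.
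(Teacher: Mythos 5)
Your proof is correct; it differs from the paper's only in how the basis-independence computation is packaged, so a brief comparison is worthwhile. The paper expands the bilinear expressions \eqref{leg1} and \eqref{leg2} in the new basis, using the $\BZ$-linearity of $\eta$ and $H$, and the cross terms cancel precisely because $ad-bc=1$. You instead stay at the level of the defining identity $\bigl[\begin{smallmatrix}\Phi\\ \Psi\end{smallmatrix}\bigr]=M_{(\om_1,\om_2)}^{-1}\bigl[\begin{smallmatrix}H(\om_2)\\ H(\om_1)\end{smallmatrix}\bigr]$ and observe that the matrix and the quasi-period column transform by the same left factor $P=\bigl[\begin{smallmatrix} d&c\\ b&a\end{smallmatrix}\bigr]$, which then cancels. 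The bookkeeping point you flag as the only genuine obstacle is indeed the crux, and it checks out with your $P$: using linearity of $\eta$ and $H$ on the (unchanged) lattice, $PM_{(\om_1,\om_2)}$ has rows $(d\om_2+c\om_1,\ d\eta(\om_2)+c\eta(\om_1))=(\om_2',\eta(\om_2'))$ and $(b\om_2+a\om_1,\ b\eta(\om_2)+a\eta(\om_1))=(\om_1',\eta(\om_1'))$, so $PM_{(\om_1,\om_2)}=M_{(\om_1',\om_2')}$, and likewise $P$ carries $(H(\om_2),H(\om_1))^t$ to $(H(\om_2'),H(\om_1'))^t$, so the conventions match and no transposition is left over. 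What your formulation buys is a small structural point: the cancellation $(PM)^{-1}P=M^{-1}$ needs only $\det P\neq 0$, so invariance of $M^{-1}$ applied to the column is automatic for any invertible integral change of basis, with the restriction to $\SL$ entering only through the orientation condition $\Im(\om_2'/\om_1')>0$ built into $\cl$ (and through the normalizations \eqref{leg1}--\eqref{leg2}, whose constant $2\pi i$ comes from the orientation-dependent Legendre relation \eqref{legendre}); the paper's explicit expansion, by contrast, displays the role of $ad-bc=1$ directly. Your homogeneity argument is exactly the one the paper sketches: $(\alpha\om_1,\alpha\om_2)$ is a legitimate oriented basis of $\alpha\La$ since $\alpha\om_2/\alpha\om_1=\om_2/\om_1$, and substituting \eqref{hom1} and \eqref{hom2.3} into \eqref{leg1} and \eqref{leg2} produces the factors $\alpha^{-1}\alpha^{k}=\alpha^{k-1}$ and $\alpha\,\alpha^{k}=\alpha^{k+1}$, as claimed.
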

\begin{proof}
	Let $a$, $b$, $c$ and $d$ be integers such that $ad-bc=1$. The expressions in \eqref{leg1} and \eqref{leg2} are invariant if we change the basis $(\omega_1,\omega_2)$ to the basis $(a\omega_1+b\omega_2,c\omega_1+d\omega_2)$. Indeed, using the linearity of $\eta$ and $H$ we have for the expression of $\Phi$:
	\begin{align*}
\eta(c\omega_1&+d\omega_2)H(a\omega_1+b\omega_2)-\eta(a\omega_1+b\omega_2)H(c\omega_1+d\omega_2)\\
&=[c\eta(\omega_1)+d\eta(\omega_2)][aH(\omega_1)+bH(\omega_2)]-[a\eta(\omega_1)+b\eta_(\omega_2)][cH(\omega_1)+dH(\omega_2)]\\
&=H(\omega_1)\eta(\omega_2)-H(\omega_2)\eta(\omega_1)\;.
		\end{align*}
	Similar calculations hold for the expression of $\Psi$.
	 The values of the weights are straightforward knowing that the weight is $k$ for $H$, -1 for $\eta$ and 1 for both $\omega_1$ and $\omega_2$.
	\end{proof}
We can therefore denote $\Phi$ and $\Psi$ by $\Phi_{\La}$ and $\Psi_{\La}$ as they depend only on the lattice $\La$.

\begin{prop} \cite{brady}\label{can}
	Let $\cz$ be an elliptic zeta function of weight $k$ and quasi-period function $H$, and let $\Phi_{\La}$ and $\Psi_{\La}$ be as above.  Then for each lattice ${\La}$, there exists an elliptic function $E_{\La}$ such that
	\begin{equation}\label{can1}
	\cz({\La},z)\,=\,\Phi_{\La}\;z\;+\;\Psi_{\La}\zeta(z)\;+\;E_{\La}(z).
	\end{equation}
\end{prop}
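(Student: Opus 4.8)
The plan is to verify \eqref{can1} by taking the formula there as the \emph{definition} of $E_{\La}$, namely
\[
E_{\La}(z)\,:=\,\cz(\La,z)\,-\,\Phi_{\La}\,z\,-\,\Psi_{\La}\,\zeta(\La,z),
\]
and then to show that this $E_{\La}$ is an elliptic function for $\La$, i.e.\ a meromorphic $\La$-periodic function. The guiding principle is the remark preceding the statement: two elliptic zeta functions sharing a quasi-period function differ by an elliptic function. Here the two elliptic zeta functions in play are $\cz(\La,\cdot)$ and the model
\[
g(z)\,:=\,\Phi_{\La}\,z\,+\,\Psi_{\La}\,\zeta(\La,z),
\]
a constant-coefficient combination of the identity map (an elliptic zeta function with quasi-period map $\om\mapsto\om$) and the Weierstrass zeta function $\zeta(\La,\cdot)$ (quasi-period map $\eta$). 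By \propref{indep} the coefficients $\Phi_{\La},\Psi_{\La}$ depend only on $\La$, so $g$ is well defined, and their existence rests on the invertibility of $M_{(\omega_1,\omega_2)}$, guaranteed by the Legendre relation $\det M_{(\omega_1,\omega_2)}=-2\pi i\neq0$.

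First I would determine the quasi-period function of $g$. Using the quasi-periodicity \eqref{quasi} of $\zeta$, for every $\om\in\La$ one gets
\[
g(z+\om)\,=\,\Phi_{\La}(z+\om)+\Psi_{\La}\bigl(\zeta(\La,z)+\eta(\om)\bigr)\,=\,g(z)+\Phi_{\La}\,\om+\Psi_{\La}\,\eta(\om),
\]
so the quasi-period function of $g$ is the map $\om\mapsto\Phi_{\La}\,\om+\Psi_{\La}\,\eta(\om)$. Both this map and $H$ are $\BZ$-linear on $\La$, so to prove they coincide it suffices to check equality at the two basis elements $\omega_1$ and $\omega_2$.

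The equalities on the basis are exactly the equations that define $\Phi_{\La}$ and $\Psi_{\La}$. Indeed, $\Phi_{\La}\,\omega_1+\Psi_{\La}\,\eta(\omega_1)=H(\omega_1)$ and $\Phi_{\La}\,\omega_2+\Psi_{\La}\,\eta(\omega_2)=H(\omega_2)$ are just the two scalar components of the relation defining $(\Phi,\Psi)$ through $M_{(\omega_1,\omega_2)}$, that is \eqref{leg1} and \eqref{leg2}; one checks this by substituting the expressions for $2\pi i\Phi$ and $2\pi i\Psi$ and applying the Legendre relation \eqref{legendre} to simplify. Consequently $g$ and $\cz(\La,\cdot)$ have the same quasi-period function $H$, and therefore $E_{\La}=\cz(\La,\cdot)-g$ satisfies $E_{\La}(z+\om)=E_{\La}(z)$ for all $\om\in\La$. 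Being also meromorphic, as a difference of meromorphic functions, $E_{\La}$ is elliptic for $\La$, which establishes \eqref{can1}.

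I do not anticipate a real obstacle here: the content is the $2\times2$ linear-algebra identity already encoded in the definition of $\Phi_{\La}$ and $\Psi_{\La}$. The only steps demanding attention are the reduction from all of $\La$ to the basis via $\BZ$-linearity of the two quasi-period maps, and the bookkeeping needed to see that the basis equalities are equivalent to \eqref{leg1}--\eqref{leg2} after clearing the factor $2\pi i$ using \eqref{legendre}.
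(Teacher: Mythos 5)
Your proposal is correct and follows essentially the same route as the paper: both identify the quasi-period map of $\Phi_{\La}\,z+\Psi_{\La}\,\zeta(\La,z)$ as $\om\mapsto\Phi_{\La}\,\om+\Psi_{\La}\,\eta(\om)$, match it with $H$ on the basis via the matrix relation defining $(\Phi_{\La},\Psi_{\La})$, extend by $\BZ$-linearity, and conclude the difference is elliptic. You simply spell out the linear-algebra bookkeeping that the paper compresses into the single displayed matrix identity.
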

\begin{proof}
	It is clear by construction of $\Phi$ and $\Psi$ that  the map $\Phi_{\La} z+\Psi_{\La}\zeta$ satisfies the conditions of a weight $k$ elliptic zeta function. Moreover, for each ${\La}=\omega_1\BZ+\omega_2\BZ$, the quasi-periods of
	$\Phi_{\La}\;z\;+\;\Psi_{\La}\;\zeta(z)$ are $\Phi_{\La}\;\omega_i\;+\;\Psi_{\La}\;\eta(\omega_i)$, $i=1,\,2$, which coincide with the quasi-periods $H(\omega_1)$ and $H(\omega_2)$ of $\cz$ as we have
	$\displaystyle \begin{bmatrix}  H(\omega_2)\\H(\omega_1)\end{bmatrix}=
	 \begin{bmatrix}\omega_2&\eta(\omega_2)\\ \omega_1&\eta(\omega_1) \end{bmatrix} \begin{bmatrix} \Phi_{\La}\\\Psi_{\La} \end{bmatrix}$, and therefore the two elliptic zeta functions differ by an elliptic function for the lattice ${\La}$.
\end{proof}
It is clear that the expression \eqref{can1} for an elliptic zeta function is unique up to the elliptic function $E_{\La}(z)$ since $\Phi_{\La}$ and $\Psi_{\La}$ are uniquely determined. Moreover, we view the relations \eqref{leg1} and \eqref{leg2} as the generalization for an elliptic zeta function of the Legendre relation \eqref{legendre} for the Weierstrass zeta function. Finally, using a similar proof to that of \propref{equiv}, we have
\begin{prop}
	Let $\cz$ be an elliptic zeta function with a quasi-period map $H$. For $\tau\in\fH$ and ${\La}_{\tau}=\BZ+\tau\BZ$, suppose that $H(1)$ is not identically zero, then the meromorphic function $h(\tau)=H(\tau)/H(1)$ is equivariant with respect to $\SL$.
\end{prop}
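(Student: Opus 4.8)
The plan is to mimic, almost verbatim, the proof of the equivariance of the Weierstrass ratio $\eta_{\La_\tau}(\tau)/\eta_{\La_\tau}(1)$ given earlier, the only change being that the weight $-1$ homogeneity of $\eta$ is replaced by the weight $k$ homogeneity \eqref{hom2.3} of $H$. Throughout I would abbreviate $H(\tau)=H_{\La_\tau}(\tau)$ and $H(1)=H_{\La_\tau}(1)$. By condition (\ref{c3}) both are meromorphic in $\tau$, and since $H(1)$ is assumed not identically zero, the quotient $h=H(\tau)/H(1)$ is a genuine meromorphic function on $\fH$ rather than an indeterminate expression; this is exactly what the hypothesis on $H(1)$ is needed for.

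Fix $\g=\begin{bmatrix}a&b\\c&d\end{bmatrix}\in\SL$ and $\tau\in\fH$, and set $\alpha=c\tau+d$. The starting observation is the lattice identity $\alpha\La_{\g\tau}=(c\tau+d)\BZ+(a\tau+b)\BZ=\La_\tau$, valid because $(a\tau+b,\,c\tau+d)$ is another basis of $\La_\tau$ (here $ad-bc=1$). Applying \eqref{hom2.3} with this $\alpha$ to the generators $\g\tau$ and $1$ of $\La_{\g\tau}$, and using $\alpha\cdot\g\tau=a\tau+b$ and $\alpha\cdot 1=c\tau+d$, I would obtain
\begin{align*}
H_{\La_{\g\tau}}(\g\tau)&=(c\tau+d)^{-k}\,H_{\La_\tau}(a\tau+b),\\
H_{\La_{\g\tau}}(1)&=(c\tau+d)^{-k}\,H_{\La_\tau}(c\tau+d).
\end{align*}
Expanding the right-hand sides by the $\BZ$-linearity of $H_{\La_\tau}$ gives $H_{\La_\tau}(a\tau+b)=aH(\tau)+bH(1)$ and $H_{\La_\tau}(c\tau+d)=cH(\tau)+dH(1)$. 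Forming the quotient, the common factor $(c\tau+d)^{-k}$ cancels, leaving
\[
h(\g\tau)=\frac{aH(\tau)+bH(1)}{cH(\tau)+dH(1)}=\frac{ah(\tau)+b}{ch(\tau)+d}=\g\,h(\tau),
\]
which is precisely \eqref{equiv}.

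The computation is essentially forced once the correct homogeneity and the linearity of $H$ are invoked, so I do not expect a genuine obstacle. The single point worth flagging is the role of the weight: $k$ enters only through the scalar $(c\tau+d)^{-k}$, which occurs identically in numerator and denominator and therefore disappears upon taking the ratio. This is why equivariance holds for an elliptic zeta function of \emph{any} weight $k$, and not merely for the weight $-1$ of the Weierstrass $\zeta$ treated in \propref{prop2.1}.
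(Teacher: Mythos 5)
Your proof is correct and is exactly what the paper intends: the paper's own ``proof'' consists of the remark that the result follows ``using a similar proof to that of'' the Weierstrass case, and you carry out that adaptation faithfully, with the weight-$k$ factor $(c\tau+d)^{-k}$ cancelling in the quotient just as the factor $(c\tau+d)$ did for $\eta$ in the weight $-1$ case. Your flagged observation that the weight plays no role after cancellation is precisely why the paper felt free to omit the computation.
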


\section{Modular forms}\label{section4}

In this section we will investigate the connection between
elliptic zeta functions and modular forms for $\SL$.
In the following theorem, we will show that each elliptic zeta functions gives rise
to a weight 2 (meromorphic) modular form for $\SL$, and conversely, each weight 2 modular form yields an elliptic zeta function.

\begin{thm}\label{thm4.1}
Let $\cz$ be an elliptic zeta function with $\Phi_{\La}$ and $\Psi_{\La}$ as in \eqref{can1} and suppose $\Psi_{{\La}_{\tau}}$ is not identically zero as a function of $\tau$. Then the map
\begin{equation}\label{map1}
\cz \longmapsto \frac{\Phi_{{\La}_{\tau}}}{\Psi_{{\La}_{\tau}}}
\end{equation}
is well defined between the set of elliptic zeta functions and the space of weight 2 modular forms $M_2(\SL)$. In addition, this map is surjective.
\end{thm}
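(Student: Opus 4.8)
The plan is to establish two things separately: first that for any elliptic zeta function $\cz$ the function $f(\tau)=\Phi_{\La_{\tau}}/\Psi_{\La_{\tau}}$ is a weight $2$ meromorphic modular form, and then that every element of $M_2(\SL)$ arises this way.

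For the well-definedness, the starting point is the lattice identity
\[
\La_{\g\tau}=\BZ+(\g\tau)\BZ=\frac{1}{c\tau+d}\bigl((c\tau+d)\BZ+(a\tau+b)\BZ\bigr)=\frac{1}{c\tau+d}\,\La_{\tau},
\]
valid for $\g=\begin{bmatrix}a&b\\c&d\end{bmatrix}\in\SL$, which is the same change-of-basis observation used earlier to prove that $H(\tau)/H(1)$ is equivariant. Feeding this into the homogeneity weights of $\Phi_{\La}$ and $\Psi_{\La}$ recorded in \propref{indep} (weights $k-1$ and $k+1$ respectively) gives
\[
\Phi_{\La_{\g\tau}}=(c\tau+d)^{-(k-1)}\Phi_{\La_{\tau}},\qquad
\Psi_{\La_{\g\tau}}=(c\tau+d)^{-(k+1)}\Psi_{\La_{\tau}},
\]
so that $f(\g\tau)=(c\tau+d)^{2}f(\tau)$, which is precisely the weight $2$ transformation law. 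Meromorphy of $f$ on $\fH$ is then immediate from the explicit formulas \eqref{leg1} and \eqref{leg2}: for $\La_{\tau}$ they express $2\pi i\,\Phi_{\La_{\tau}}$ and $2\pi i\,\Psi_{\La_{\tau}}$ as combinations of $H(1),H(\tau)$ (meromorphic by condition (\ref{c3})) and of $\eta_{\La_{\tau}}(1),\eta_{\La_{\tau}}(\tau)$ (meromorphic by \eqref{expr1}). The one point that deserves care is the behavior at the cusp: condition (\ref{c3}) only controls $\fH$, so to land in $M_2(\SL)$ one must also confirm that $f$ is meromorphic at $i\infty$, which I would read off from the $q$-expansions of the quasi-period functions. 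Since $\Phi_{\La_{\tau}}$ and $\Psi_{\La_{\tau}}$ are uniquely attached to $\cz$ by \propref{indep} and \propref{can}, the assignment is unambiguous, and the map is well defined.

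For surjectivity, given $f\in M_2(\SL)$ I would simply reverse the recipe with the most convenient normalization, namely weight $k=-1$. Put $\Psi_{\La}\equiv 1$, which is homogeneous of weight $k+1=0$, and let $\Phi$ be the weight $k-1=-2$ lattice function determined by
\[
\Phi_{\omega_1\BZ+\omega_2\BZ}=\omega_1^{-2}\,f(\omega_2/\omega_1).
\]
A one-line computation using $ad-bc=1$ and $f(\g\tau)=(c\tau+d)^2f(\tau)$ shows this is independent of the chosen basis, hence a genuine homogeneous function on $\cl$; this basis-independence is exactly where the modularity of $f$ is consumed. I then set
\[
\cz(\La,z)=\Phi_{\La}\,z+\zeta(\La,z),
\]
which by \propref{prop2.1} (the identity having weight $1$ and $\zeta$ weight $-1$) is an elliptic zeta function of weight $-1$: conditions (\ref{c1}) and (\ref{c2}) are built in, and (\ref{c3}) holds because its quasi-periods $H(1)=f(\tau)+\eta_{\La_{\tau}}(1)$ and $H(\tau)=\tau f(\tau)+\eta_{\La_{\tau}}(\tau)$ are meromorphic on $\fH$. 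By the uniqueness in \propref{can}, the invariants of this $\cz$ are exactly the chosen $\Phi$ and $\Psi$, so it maps to $\Phi_{\La_{\tau}}/\Psi_{\La_{\tau}}=f$, proving surjectivity.

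The routine parts are the two homogeneity computations; the step I expect to be the genuine obstacle is the cusp analysis in the well-definedness direction, since nothing in the defining conditions of an elliptic zeta function speaks directly about the point $i\infty$, and one must either extract meromorphy there from the structure of $H(1)$ and $H(\tau)$ or absorb it into the definition of $M_2(\SL)$.
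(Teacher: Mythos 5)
Your proof is correct and follows essentially the same route as the paper: the identical homogeneity computation (weights $k-1$ and $k+1$ for $\Phi_{\La}$ and $\Psi_{\La}$) yielding $f(\g\tau)=(c\tau+d)^2f(\tau)$, and the identical surjectivity construction $\cz(\La,z)=\Phi_{\La}\,z+\zeta(\La,z)$ with $\Psi_{\La}=1$ and $\Phi_{\La}=\omega_1^{-2}f(\omega_2/\omega_1)$ of weight $-1$. The cusp issue you flag as the ``genuine obstacle'' is in fact absorbed into the definition, exactly as you suggest as an alternative: at this point the paper's $M_2(\SL)$ carries no condition at $i\infty$, and the behavior at the cusp is only taken up at the end of \secref{section5}, where it is verified to be compatible with the correspondence.
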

\begin{proof}
	Let $k\in\BZ$ be the weight of $\cz$ and set
\[
f(\tau)\,=\,\frac{\Phi_{{\La}_{\tau}}}{\Psi_{{\La}_{\tau}}}\,,\ \tau\in\fH.
\]
Since $\Phi_{{\La}_{\tau}}$ and $\Psi_{{\La}_{\tau}}$ are meromorphic in $\tau$, so is $f(\tau)$. Now let $\displaystyle \gamma=\begin{bmatrix}
a&b\\c&d
\end{bmatrix}\in\SL$. Since $\Phi_{\La}$ and $\Psi_{\La}$ are homogeneous of weights $k-1$ and $k+1$ respectively,  we have
\[
\Phi_{{\La}_{\gamma\tau}}\;=\;(c\tau +d)^{-k+1}\Phi_{(a\tau+b)\BZ+(c\tau+d)\BZ} \;=\;
(c\tau +d)^{-k+1}\Phi_{{\La}_{\tau}}
\]
and
\[
\Psi_{{\La}_{\gamma\tau}}\;=\;(c\tau +d)^{-k-1}\Psi_{(a\tau+b)\BZ+(c\tau+d)\BZ} \;=\;
(c\tau +d)^{-k-1}\Psi_{{\La}_{\tau}}.
\]
Therefore
\[
f(\gamma\tau)\,=\,(c\tau+d)^2\,f(\tau)\;.
\]
Hence the map is well defined as $\Phi_{\La}$ and $\Psi_{\La}$ are uniquely determined by $\cz$.
We now prove that the map is onto. Let $f\in M_2(\SL)$ and set, for ${\La}=\omega_1\BZ+\omega_2\BZ$,
\[
\Phi_{\La}\,=\, \frac{1}{\omega_1^2}\;f\left(\frac{\omega_2}{\omega_1}\right)\,,\ \Psi_{\La}\,=\,1\,.
\]
The map $\Phi_{\La}$ is well defined in the sense that it is independent of the choice of the basis $(\omega_1,\omega_2)$. Indeed, if $\displaystyle \gamma=\begin{bmatrix}
a&b\\c&d
\end{bmatrix}\in\SL$, then
\begin{align*}
\frac{1}{(a\omega_1+b\omega_2)^2}\;f\left(\frac{c\omega_1+d\omega_2}{a\omega_1+b\omega_2}\right)\;&=\;\frac{1}{(a\omega_1+b\omega_2)^2}\;f\left(\frac{d\;\frac{\omega_2}{\omega_1}+c}{b\;\frac{\omega_2}{\omega_1}+a}\right)\\
&=\; \frac{(b\;\frac{\omega_2}{\omega_1}+a)^2}{(a\omega_1+b\omega_2)^2}\;f\left(\frac{\omega_2}{\omega_1}\right)\\
&=\;\frac{1}{\omega_1^2}\;f\left(\frac{\omega_2}{\omega_1}\right)\,.
\end{align*}
Thus, we have an elliptic zeta function
\[
\cz({\La},z)\;=\; \frac{1}{\omega_1^2}\;f\left(\frac{\omega_2}{\omega_1}\right)\,z+\zeta(z)
\]
of weight -1 that is sent to $f(\tau)$ by the map \eqref{map1}.
	\end{proof}

\section{Equivariant functions}\label{section5}
We introduced the notion of equivariant functions earlier as being meromorphic functions on $\fH$ that commute with the action of the modular group. They were
extensively studied in \cite{sb1,sb2,ss2,ss1} in connection with modular forms, vector-valued modular forms and other topics. In particular, each modular form of any weight (even with a character) gives rise to an equivariant function. Indeed,
if $f$ is a modular form of weight $k$, then the function
\[
h_f(\tau)\;=\;\tau+k\;\frac{f(\tau)}{f'(\tau)}
\]
is equivariant with respect to $\SL$.

 Not all the equivariant functions arise in this way from a modular form. In fact a necessary and sufficient condition for an equivariant function $h$ to be equal to $h_f$ for some modular form $f$ is that the poles  of $\displaystyle 1/(h(\tau)-\tau)$ in $\fH\cup\{\infty\}$ are all simple with  rational residues \cite{sb2}. Such functions are referred to as the rational equivariant functions.

Important applications were obtained regarding the critical points of modular forms and
their $q-$expansion \cite{ss3}. As an example, recall the Eisenstein series $G_2(\tau)$ defined by
\[
G_2(\tau)=\frac{1}{2}\sum_{n\neq0}\,\frac{1}{n^2}\,+\,\frac{1}{2}\sum_{m\neq 0}\sum_{n\in\BZ}\,\frac{1}{(m\tau+n)^2}
\]
and the normalized weight two Eisenstein series
\[
E_2(\tau)=\frac{6}{\pi^2}G_2(\tau)=1-24\,\sum_{n=1}^{\infty}\,\sigma_1(n)q^n\,,\ \ q=e^{2\pi i\tau},
\]
where $\sigma_1(n)$ is the sum of positive divisors of $n$.
On can easily deduce from the definition of the Weierstrass $\zeta-$function that \cite{lang}
\begin{align*}
	\eta(1)&=G_2(\tau)\\
	\eta(\tau)&=\tau G_2(\tau)-2\pi i,
	\end{align*}
and since we have
\[
E_2(\tau)=\frac{1}{2\pi i} \frac{\Delta'(\tau)}{\Delta(\tau)},
\]
where $\Delta$ is the weight 12 cusp form (the discriminant)
\[
\Delta(\tau)=q\,\prod_{n=1}^{\infty}\,(1-q^n)^{24}\,,\ \ q=e^{2\pi i\tau},
\]
we deduce
\begin{prop}
	The equivariant function $\displaystyle h(\tau)=\frac{\eta(\tau)}{\eta(1)}$ from \propref{equiv} is rational with
	\[
	h(\tau)\,=\,\tau+12\;\frac{\Delta}{\Delta'}\,.
	\]
\end{prop}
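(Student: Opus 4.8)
The plan is to substitute the explicit expressions for the quasi-periods directly into the definition $h(\tau)=\eta(\tau)/\eta(1)$ and then translate the resulting expression from the Eisenstein language into the $\Delta$ language. First I would insert the stated values $\eta(1)=G_2(\tau)$ and $\eta(\tau)=\tau G_2(\tau)-2\pi i$ into the quotient, obtaining
\[
h(\tau)=\frac{\tau G_2(\tau)-2\pi i}{G_2(\tau)}=\tau-\frac{2\pi i}{G_2(\tau)}.
\]
This already exhibits $h$ as $\tau$ plus a correction term that is meromorphic on $\fH$, so the whole problem reduces to identifying $-2\pi i/G_2(\tau)$ with $12\,\Delta/\Delta'$.

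Next I would eliminate $G_2$ in favour of the logarithmic derivative of $\Delta$. Combining the normalization $E_2=\tfrac{6}{\pi^2}G_2$ with the identity $E_2=\tfrac{1}{2\pi i}\,\Delta'/\Delta$ expresses $G_2$ as an explicit constant multiple of $\Delta'/\Delta$; inverting this relation rewrites $1/G_2$ as a constant multiple of $\Delta/\Delta'$. Substituting back into the formula for $h$ and simplifying the numerical factor — keeping careful track of the powers of $\pi$, the factors of $i$ (using $i^2=-1$), and the normalization constant relating $G_2$ and $E_2$ — should collapse the correction term exactly to $12\,\Delta/\Delta'$, yielding the asserted closed form $h(\tau)=\tau+12\,\Delta/\Delta'$.

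Finally, the rationality assertion is essentially free once the closed form is in hand: the function $\tau+12\,\Delta/\Delta'$ is precisely $h_f$ for $f=\Delta$, a holomorphic modular form of weight $12$, so $h=h_\Delta$ falls under the class of equivariant functions arising from a modular form and is therefore rational in the sense recalled above. Alternatively one can check the criterion directly, since $1/(h(\tau)-\tau)=\tfrac{1}{12}\,\Delta'/\Delta$ is a constant multiple of the logarithmic derivative of $\Delta$, whose only singularity in $\fH\cup\{\infty\}$ comes from the simple zero of $\Delta$ at the cusp and hence is a simple pole with rational residue.

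I expect the only genuine obstacle to be the bookkeeping of constants in the middle step: the coefficient $12$ emerges only if the normalization of $G_2$ (equivalently, the exact constant in $E_2=\tfrac{6}{\pi^2}G_2$) and the sign and size of the $2\pi i$ term in $\eta(\tau)$ are handled consistently with the Legendre relation \eqref{legendre}. Pinning the numerical factor down so that it lands exactly on $12$, rather than on a multiple of it, is the one place where care is truly required; everything else is formal substitution.
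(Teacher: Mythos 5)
Your route is the paper's own: the paper offers no separate argument, deducing the proposition by exactly the substitution you describe. But the one step you defer --- ``pinning the numerical factor down'' --- is precisely the step that fails as you have set it up. With the two normalizations you cite, namely $\eta(1)=G_2(\tau)$ and $E_2=\tfrac{6}{\pi^2}G_2$, the formal substitution gives
\[
h(\tau)-\tau\;=\;-\frac{2\pi i}{G_2(\tau)}\;=\;-\frac{12 i}{\pi E_2(\tau)}\;=\;-\frac{12i}{\pi}\cdot 2\pi i\,\frac{\Delta}{\Delta'}\;=\;24\,\frac{\Delta}{\Delta'},
\]
that is, $24$ rather than $12$. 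The point is that the classical quasi-period of the Weierstrass $\zeta$ for $\BZ+\tau\BZ$ is $\eta(1)=\tfrac{\pi^2}{3}E_2(\tau)$: the factors $\tfrac12$ in the paper's displayed definition of $G_2$ make the line ``$\eta(1)=G_2(\tau)$'' off by a factor of $2$ (a slip in the paper, which one detects from the degenerate limit $\zeta(z;\BZ)=\pi\cot\pi z+\tfrac{\pi^2}{3}z$, forcing $\eta(1)\to\pi^2/3$ as $\Im\tau\to\infty$, whereas $\tfrac{\pi^2}{6}E_2\to\pi^2/6$). With the corrected normalization the computation does land on the stated constant:
\[
h(\tau)-\tau\;=\;-\frac{2\pi i}{\eta(1)}\;=\;-\frac{6 i}{\pi E_2(\tau)}\;=\;-\frac{6i}{\pi}\cdot 2\pi i\,\frac{\Delta}{\Delta'}\;=\;12\,\frac{\Delta}{\Delta'}.
\]
So a complete proof must either carry the constants through with $\eta(1)=\tfrac{\pi^2}{3}E_2$, or notice and repair the inconsistency in the quoted identities; asserting that the factor ``should collapse exactly to $12$'' leaves the only nontrivial point unproved, and the literal execution of your plan contradicts the statement being proved.

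Two further remarks. First, your closing observation is actually the cleanest safeguard here: since $\tau+12\,\Delta/\Delta'=h_\Delta$ and the coefficient in $h_f=\tau+k\,f/f'$ must equal the weight of $f$, the constant is forced to be $12$, and this cross-check would have exposed the factor of $2$ immediately; once the closed form is in hand, rationality is indeed immediate because $h=h_\Delta$ falls under the class $h_f$ with $f$ a modular form (and $1/(h-\tau)=\tfrac{1}{12}\Delta'/\Delta$ has the required simple-pole/rational-residue behavior at the cusp). Second, be careful invoking \eqref{legendre} for the sign: as printed, $\omega_1\eta(\omega_2)-\omega_2\eta(\omega_1)=2\pi i$ would give $\eta(\tau)=\tau\eta(1)+2\pi i$, which contradicts the paper's own (correct) line $\eta(\tau)=\tau G_2-2\pi i$ that you use in your first display; the latter is the right one, and the sign then works out since $(-6i/\pi)(2\pi i)=+12$.
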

 Let's denote by $Eq$ the set of all equivariant functions with respect to $\SL$.
Although $h(\tau)=\tau$ is trivially equivariant, it will be excluded from $Eq$.

Recall that if  $f\in M_2(\SL)$ and $\displaystyle \gamma=\begin{bmatrix}a&b\\c&d\end{bmatrix}\,\in \GL_2(\BC)$ we denote  by $\gamma f(\tau)$
\[
\gamma f(\tau)\,=\,\frac{af(\tau)+b}{cf(\tau)+d}\;.
\]
Now recall from \S3 the matrix
\[
M_{(1,\tau)}\;=\;\begin{bmatrix}
\tau & \eta(\tau)\\1&\eta(1)
\end{bmatrix}
\]
which is invertible thanks to the Legendre relation.
\begin{thm}\label{bij1}
	The map from $M_2(\SL)$ to $Eq$
	\[
	f\longmapsto M_{(1,\tau)}\,f
	\]
	is a bijection. The inverse map is given by
	\[
	h\longmapsto M_{(1,\tau)}^{-1}\,h\;.
	\]
\end{thm}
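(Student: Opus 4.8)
The plan is to reduce the whole statement to a single transformation law for the matrix $M_{(1,\tau)}$ under $\SL$, after which the equivariance of the image and the weight $2$ modularity of the preimage both drop out formally from the fact that $\g z=\frac{az+b}{cz+d}$ is a group action of $\GC$ on $\BC\cup\{\infty\}$, so that $(AB)\cdot z=A\cdot(B\cdot z)$ and scalar matrices act trivially.

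First I would record the key identity. For $\g=\begin{bmatrix}a&b\\c&d\end{bmatrix}\in\SL$ set
\[
D_{\g}(\tau)=\begin{bmatrix}(c\tau+d)^{-1}&0\\0&c\tau+d\end{bmatrix},
\]
and claim that $M_{(1,\g\tau)}=\g\,M_{(1,\tau)}\,D_{\g}(\tau)$. This is exactly the bookkeeping already carried out in \secref{section2} when establishing \eqref{equiv}: the basis column satisfies $(\g\tau,1)^{t}=(c\tau+d)^{-1}\g\,(\tau,1)^{t}$, while the homogeneity of weight $-1$ (\propref{prop2.1}) together with the $\BZ$-linearity of $\eta$ gives $\bigl(\eta_{\La_{\g\tau}}(\g\tau),\eta_{\La_{\g\tau}}(1)\bigr)^{t}=(c\tau+d)\,\g\,(\eta(\tau),\eta(1))^{t}$; assembling the two columns produces the factorization. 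The point is that the two columns acquire reciprocal automorphy factors, and this is precisely where weight $2$ must enter. The Legendre relation \eqref{legendre} gives $\det M_{(1,\tau)}=-2\pi i\neq 0$ for every $\tau$, so $M_{(1,\tau)}$ is invertible throughout $\fH$.

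Next I would check that both maps land in the asserted sets. The Möbius action of $D_{\g}(\tau)$ is $w\mapsto(c\tau+d)^{-2}w$, and that of $D_{\g}(\tau)^{-1}$ is $w\mapsto(c\tau+d)^{2}w$. Hence for $f\in M_2(\SL)$ the weight $2$ law gives $D_{\g}(\tau)\cdot f(\g\tau)=(c\tau+d)^{-2}f(\g\tau)=f(\tau)$, and the key identity yields
\[
M_{(1,\g\tau)}\,f(\g\tau)=\g\cdot\bigl(M_{(1,\tau)}\cdot(D_{\g}(\tau)\cdot f(\g\tau))\bigr)=\g\cdot\bigl(M_{(1,\tau)}\,f(\tau)\bigr),
\]
so $M_{(1,\tau)}f$ satisfies \eqref{equiv} and is equivariant, its meromorphy being immediate from the meromorphy of the entries of $M_{(1,\tau)}$. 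Symmetrically, for $h\in Eq$ I set $f=M_{(1,\tau)}^{-1}h$ and use $M_{(1,\g\tau)}^{-1}=D_{\g}(\tau)^{-1}M_{(1,\tau)}^{-1}\g^{-1}$ together with $\g^{-1}\cdot h(\g\tau)=h(\tau)$ (the equivariance of $h$) to get
\[
f(\g\tau)=D_{\g}(\tau)^{-1}\cdot\bigl(M_{(1,\tau)}^{-1}\,h(\tau)\bigr)=(c\tau+d)^{2}f(\tau),
\]
so $f$ obeys the weight $2$ transformation law and lies in $M_2(\SL)$.

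Finally, the two maps are mutually inverse pointwise in $\tau$: since the fractional linear action is a group action and $M_{(1,\tau)}^{-1}M_{(1,\tau)}$ is the identity, one has $M_{(1,\tau)}^{-1}\cdot(M_{(1,\tau)}\cdot w)=w$ and $M_{(1,\tau)}\cdot(M_{(1,\tau)}^{-1}\cdot w)=w$ for all $w$, giving bijectivity with the stated inverse. I expect the only genuinely non-formal step to be the verification of the matrix identity in the second paragraph, namely tracking the two reciprocal automorphy factors carried by the basis column and the quasi-period column; everything after that is the formal group-action computation. The one consistency point worth noting is that $M_{(1,\tau)}\cdot w=\tau$ is unsolvable by the Legendre relation (it would force $\eta(\tau)=\tau\,\eta(1)$), so no genuine $f$ maps to the constant equivariant function $\tau$, which matches its exclusion from $Eq$.
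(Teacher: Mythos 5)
Your proof is correct, and its computational kernel is exactly the paper's --- the transformation laws $\eta_{\La_{\g\tau}}(\g\tau)=(c\tau+d)(a\eta(\tau)+b\eta(1))$, $\eta_{\La_{\g\tau}}(1)=(c\tau+d)(c\eta(\tau)+d\eta(1))$, and $f(\g\tau)=(c\tau+d)^2f(\tau)$ --- but you organize it differently. The paper substitutes these laws directly into the fraction $h(\g\tau)=\bigl(\g\tau f(\g\tau)+\eta_{\La_{\g\tau}}(\g\tau)\bigr)/\bigl(f(\g\tau)+\eta_{\La_{\g\tau}}(1)\bigr)$, simplifies to $\g h(\tau)$, and dismisses the converse with a one-line ``similarly''; you instead isolate the cocycle identity $M_{(1,\g\tau)}=\g\,M_{(1,\tau)}\,D_{\g}(\tau)$ once and for all, after which the equivariance of $M_{(1,\tau)}f$, the weight-$2$ modularity of $M_{(1,\tau)}^{-1}h$, and the mutual-inverse property all drop out formally from the M\"obius action being a genuine group action of $\GC$ on $\BC\cup\{\infty\}$ with scalars acting trivially. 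This buys a uniform treatment: the backward direction the paper leaves implicit is the same identity read for $M^{-1}$, and the diagonal factor $D_{\g}(\tau)$, whose M\"obius action is multiplication by $(c\tau+d)^{-2}$, makes visible why the weight must be exactly $2$. Your closing observation --- that $M_{(1,\tau)}\cdot w=\tau$ would force $\eta(\tau)=\tau\,\eta(1)$, contradicting the Legendre relation \eqref{legendre}, so only $w=\infty$ (not a modular form) could produce the excluded trivial equivariant function $\tau$ --- is a consistency check the paper does not record; dually, for $h\in Eq$ with $h\not\equiv\tau$ the function $M_{(1,\tau)}^{-1}h$ is never identically $\infty$, so the two maps really do match the stated sets $M_2(\SL)$ and $Eq$.
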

\begin{proof}
	Let $f\in M_2(\SL)$ and set
	\[
	h(\tau)\;=\; M_{(1,\tau)}\,f(\tau)\;=\;\frac{\tau f(\tau)+\eta(\tau)}{f(\tau)+\eta(1)}\;
	\]
For $\displaystyle \gamma=\begin{bmatrix}a&b\\c&d\end{bmatrix}\,\in \SL$, we have
\begin{align*}
	h(\gamma\tau)&\;=\; \frac{\gamma \tau f(\gamma\tau)+\eta_{\Lambda_{\gamma\tau}}(\gamma\tau)}{f(\gamma\tau)+\eta_{\Lambda_{\tau}}(1)}\;.    \\
	\end{align*}
Since
\[
\eta_{\Lambda_{\gamma\tau}}(\gamma\tau)=(c\tau+d)(a\eta(\tau)+b\eta(1)),
\]
\[
\eta_{\Lambda_{\tau}}(1)=(c\tau+d)(c\eta(\tau)+d\eta(1))
\]
and
\[
f(\g \tau)=(c\tau +d)^2f(\tau)\;,
\]
we have
\[
h(\gamma\tau)=\frac{(a\tau+b)f(\tau)+a\eta(\tau)+b\eta(1)}{(c\tau+d)f(\tau)+c\eta(\tau)+d\eta(1)}=\gamma h(\tau)\;.
\]
Similarly, one can prove that if $h\in Eq$, then  $M_{(1,\tau)}^{-1}\,h\in M_2(\SL)$.
	\end{proof}

	Usually the definition of a meromorphic modular forms involves also the behavior at the cusps. More precisely if $f$ is a modular form for $\SL$, then $f(\tau+1)=f(\tau)$ for all $\tau\in\fH$ and thus has a Fourier expansion which is a Laurent series in $q=\exp(2\pi i\tau)$. We say that $f$ is meromorphic at the cusp $\infty$ if this Laurent series has only finitely many negative powers of $q$. In the meantime, if $h$ is equivariant for $\SL$ then $h(\tau+1)=h(\tau)+1$. Hence $h(\tau)-\tau$ is also periodic of period one and thus has a Fourier expansion in $q$. The proper behavior of $h$ at the cusp at infinity is that
	$h(\tau)-\tau$ is meromorphic in $q$, see \cite{sb2}.  If a weight two modular form $f$ and an equivariant function $h$ correspond to each other by \thmref{bij1}, then $\displaystyle h(\tau)=\frac{\tau f(\tau)+\eta(\tau)}{f(\tau)+\eta(1)}$ and thus, using the Legendre relation \eqref{legendre}, we have
\[
h(\tau)-\tau\,=\,\frac{2\pi i}{f(\tau)+\eta(1)}.
\]
Since $\eta(1)=G_2(\tau)$ is holomorphic in $q$ we see that the behavior at infinity for both $f$ and $h$ is preserved under the correspondence of \thmref{bij1}.

Taking into account the results of the above sections, we have thus established a correspondence between the set of elliptic zeta functions,
the space of modular forms of weight 2 for $\SL$ and the set of equivariant functions for $\SL$ summarized as follows

\begin{center}
\begin{tikzpicture}[scale=2]
\node (Z) at (0,1.4) {\large $Elliptic\  Zetas$};
\node (E) at (2,0) {\large $Eq$}; 
\node (M) at (-2,0) {\large $M_2$};  
\node [above] at (0,0) {\large$ \sim $} ;
\draw [>=stealth,->>] (Z) to (E);
\draw [>=stealth,->] (M) to (E);
\draw [>=stealth,->>] (Z) to (M);
\node  [above,rotate=35] at (-1,0.7) {$\frac{\Psi}{\Phi}\mapsfrom\mathcal Z$};
\node  [above,rotate=-35] at (1,0.7) { $\mathcal Z\mapsto \frac{H_2}{H_1}$};
\node  [below,rotate=0] at (0,0) { $f\mapsto M_{(1,\tau)} f$};
\end{tikzpicture}
\end{center}

\noindent where $H_1$ and $H_2$ are the quasi-periods of the elliptic zeta function $\cz$, 
$\Phi$ and $\Psi$ are such that $\cz(\Lambda,z)=\Phi\;z+\Psi\zeta(z)+E$ with $E$ elliptic and $M_{(1,\tau)}$ as above. Of course, this diagram is commutative and each map is surjective.

\section{The case of modular subgroups}\label{section6}

So far the constructions in the previous sections involve the full modular group $\SL$. In the meantime, the notion of modular forms or equivariant functions can be restricted to any finite index subgroup. Thus we need to define the notion of
elliptic zeta functions for any such subgroup.

Fix a modular subgroup $\Gamma$ of finite index in $\SL$. Set
\[
{\mathbb M}=\{(\omega_1,\omega_2)\in\BC^2:\, \Im (\omega_2/\omega_1)>0\}.
\]
The group $\Gamma$ acts on $\mathbb M$ in the usual way:
\[
\gamma (\omega_1,\omega_2)=(\omega_1,\omega_2)\gamma^t.
\]
Denote by $\Omega_\Gamma$ the quotient $\Gamma\backslash \mathbb M$ and the class of $(\omega_1,\omega_2)$ by $[\omega_1,\omega_2]$. Also, $\BC^*$ acts on $\mathbb M$ in the usual way and this action extends to $\Omega_\Gamma$ as:
\[
\alpha[\omega_1,\omega_2]=[\alpha\omega_1,\alpha\omega_2].
\]
If $\Gamma=\SL$, then $[\omega_1,\omega_2]$ is identified with the lattice
$\Lambda_{(\omega_1,\omega_2)}=\omega_1\BZ+\omega_2\BZ$, but for an arbitrary finite index subgroup $\Gamma$, the situation is different. Following the ideas in \cite{con}, $\Omega_\Lambda$ is identified with the set of pairs of lattices $(\Lambda,\Lambda')$ with $\Lambda'$ being a finite index sub-lattice of $\Lambda$ fixed by $\Gamma$ and $\Lambda'$ is the smallest such lattice (and thus defined as the intersection of all such sub-lattices that are $\Gamma-$invariant).
If such pair $(\Lambda, \Lambda')$ is given, and as $\SL$ acts by automorphisms of $\Lambda$ by a change of basis, $\Gamma$ would be defined by
\[
\Gamma=\{\gamma\in\SL:\, \gamma\Lambda'\subseteq \Lambda' \}.
\]
For example, if $\Gamma=\Gamma(N)$ is the principal congruence subgroup of level $N\geq 1$, then $\Lambda'=N\omega_1\BZ+N\omega_2\BZ$ which is a sub-lattice of $\omega_1\BZ+\omega_2\BZ$ of index $N^2$ . If $\Gamma=\Gamma_0(N)$, then $\Lambda'=\omega_1\BZ+N\omega_2\BZ$ of index $N$ in
 $\omega_1\BZ+\omega_2\BZ$. However, we will not need this identification in what follows.

 A $\Gamma-$elliptic zeta function with respect to $\Gamma$ is a map
 \[
 \cz:\Omega_\Gamma \times \BC\longrightarrow \BC\cup\{\infty\}
 \]
 satisfying
 \begin{enumerate}
 	\item For each $[\omega_1,\omega_2]\in\Omega_\Gamma$, the map
 		\[
 		\cz([\omega_1,\omega_2],\cdot):\BC\longrightarrow \BC\cup\{\infty\}
 		\]
 		is quasi-periodic with respect to $\Lambda_{(\omega_1,\omega_2)}$, that is,
 		for all $z\in\BC$ and all $\omega\in \Lambda_{(\omega_1,\omega_2)}$ we have
 			\[
 				\cz([\omega_1,\omega_2],z+\omega)=
 						\cz([\omega_1,\omega_2],z)+H_{[\omega_1,\omega_2]}(\omega).
 			\]
 			\item The map $\cz$ is homogeneous, that is, there exists an integer $k$, referred to as the weight of $\cz$, such that for all $\alpha\in\BC^*$,
 			$[\omega_1,\omega_2]\in\Omega_\Gamma$ and $z\in\BC$ we have
 			\[
 			\cz(\alpha[\omega_1,\omega_2], \alpha z)=\alpha^k
 				\cz([\omega_1,\omega_2],z).
 			\]
 			\item The maps
 			\[
 			\tau\mapsto H_{[1,\tau]}(\tau)\,\mbox{ and }\,\tau\mapsto
 				H_{[1,\tau]}(1)
 			\]
 			are meromorphic in $\fH$.
 \end{enumerate}
From this definition, it is clear that the quasi-period map $H_{[\omega_1,\omega_2]}$ is $\BZ-$linear on the lattice $\Lambda_{(\omega_1,\omega_2)}$
and thus it is completely determined by its values on $\omega_1$ and $\omega_2$. It is also homogeneous of weight $k$:
\[
H_{[\alpha\omega_1,\alpha\omega_2]}(\alpha\omega)=\alpha^k
H_{[\omega_1,\omega_2]}(\omega)\,,\ \omega\in\Lambda_{(\omega_1,\omega_2)}\,,\ \alpha\in\BC^*.
\]
Using the same arguments as in \secref{section3}, one can easily establish the following
\begin{prop}
	Let $\cz:\Omega_\Gamma\times \BC \longrightarrow \BC\cup \{\infty\}$ be a
$\Gamma-$elliptic zeta function. There exists unique maps $\Phi_{[\omega_1,\omega_2]}$ of weight $k-1$ and $\Psi_{[\omega_1,\omega_2]}$
of weight $k+1$ such that for all $[\omega_1,\omega_2]\in\Omega_\Gamma$ and $z\in\BC$ we have:
\[
\cz([\omega_1,\omega_2],z)=\Phi_{[\omega_1,\omega_2]}\,z + \Psi_{[\omega_1,\omega_2]}\,\zeta(z)+E_{[\omega_1,\omega_2]}(z)\;,
\]
where $E_{[\omega_1,\omega_2]}(z) $ is an $\La_{(\omega_1,\omega_2)}-$elliptic function.
\end{prop}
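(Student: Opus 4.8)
The plan is to transcribe, almost verbatim, the construction of \secref{section3}, replacing the lattice $\La$ there by the class $[\omega_1,\omega_2]\in\Omega_\Gamma$ and the quasi-period map by $H_{[\omega_1,\omega_2]}$, while keeping the Weierstrass data $\eta$ and $\zeta$ attached to the underlying lattice $\La_{(\omega_1,\omega_2)}$. The observation that makes this transfer legitimate is that, since $\Gamma\subseteq\SL$ acts on $\mathbb M$ by change of basis, the lattice $\La_{(\omega_1,\omega_2)}=\omega_1\BZ+\omega_2\BZ$ depends only on the class $[\omega_1,\omega_2]$; consequently the Weierstrass $\zeta(\La_{(\omega_1,\omega_2)},z)$ and its quasi-period map $\eta$ are well defined once the class is given. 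First, for a chosen representative $(\omega_1,\omega_2)$, I would form the matrix $M_{(\omega_1,\omega_2)}$ exactly as in \secref{section3} from $\eta(\omega_1),\eta(\omega_2)$; the Legendre relation \eqref{legendre} gives $\det M_{(\omega_1,\omega_2)}=-2\pi i$, so it is invertible, and I define
\[
\begin{bmatrix}\Phi\\\Psi\end{bmatrix}=M_{(\omega_1,\omega_2)}^{-1}\begin{bmatrix}H_{[\omega_1,\omega_2]}(\omega_2)\\H_{[\omega_1,\omega_2]}(\omega_1)\end{bmatrix},
\]
the analogue of \eqref{leg1} and \eqref{leg2}.

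Next I would check that $\Phi$ and $\Psi$ descend to $\Omega_\Gamma$ and carry the stated weights. A different representative of the same class is $(\omega_1',\omega_2')=(\omega_1,\omega_2)\gamma^t$ with $\gamma\in\Gamma$; since both $\eta$ and $H_{[\omega_1,\omega_2]}$ are $\BZ$-linear on the unchanged lattice $\La_{(\omega_1,\omega_2)}$, the bilinear computation in the proof of \propref{indep} applies unchanged. In fact that computation proves invariance under all of $\SL$, of which $\Gamma$ is a subgroup, so $\Phi$ and $\Psi$ do not depend on the representative, and I may write $\Phi_{[\omega_1,\omega_2]}$ and $\Psi_{[\omega_1,\omega_2]}$. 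For the weights I substitute into the defining expressions the homogeneity weight $k$ of $H_{[\omega_1,\omega_2]}$, the weight $-1$ of $\eta$, and the weight $1$ of each $\omega_i$, obtaining $k-1$ for $\Phi_{[\omega_1,\omega_2]}$ and $k+1$ for $\Psi_{[\omega_1,\omega_2]}$.

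To obtain the decomposition, I note that $\Phi_{[\omega_1,\omega_2]}\,z+\Psi_{[\omega_1,\omega_2]}\,\zeta(\La_{(\omega_1,\omega_2)},z)$ is itself a $\Gamma$-elliptic zeta function of weight $k$, whose quasi-periods at $\omega_1,\omega_2$ are $\Phi_{[\omega_1,\omega_2]}\,\omega_i+\Psi_{[\omega_1,\omega_2]}\,\eta(\omega_i)$; by the very choice of $\Phi,\Psi$ through $M_{(\omega_1,\omega_2)}$ these equal $H_{[\omega_1,\omega_2]}(\omega_i)$ for $i=1,2$. Hence
\[
E_{[\omega_1,\omega_2]}(z)=\cz([\omega_1,\omega_2],z)-\Phi_{[\omega_1,\omega_2]}\,z-\Psi_{[\omega_1,\omega_2]}\,\zeta(\La_{(\omega_1,\omega_2)},z)
\]
has vanishing quasi-periods on a generating set of $\La_{(\omega_1,\omega_2)}$ and is therefore $\La_{(\omega_1,\omega_2)}$-elliptic. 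Uniqueness of $\Phi_{[\omega_1,\omega_2]}$ and $\Psi_{[\omega_1,\omega_2]}$ follows from the invertibility of $M_{(\omega_1,\omega_2)}$, exactly as after \propref{can}.

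The one place that genuinely requires care — and the main, if mild, obstacle — is this descent to the quotient $\Omega_\Gamma$: one must confirm that every object entering the construction depends only on the $\Gamma$-class of $(\omega_1,\omega_2)$. This reduces to the two facts highlighted above, namely that the underlying lattice, and hence the Weierstrass $\eta$ and $\zeta$, is constant on $\Gamma$-orbits because $\Gamma$ acts by change of basis inside $\SL$, and that the $\SL$-invariance of the bilinear forms of \propref{indep} restricts to $\Gamma$-invariance. Everything else is a direct transcription of \secref{section3}.
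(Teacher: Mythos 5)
Your proposal is correct and takes essentially the same approach as the paper: the paper establishes this proposition precisely by transcribing the construction of \secref{section3} (defining $\Phi$ and $\Psi$ via $M_{(\omega_1,\omega_2)}^{-1}$ and the Legendre relation, then matching quasi-periods so the difference is elliptic) and by noting, exactly as you do, that independence of the representative of $[\omega_1,\omega_2]$ follows as in \propref{indep} with transformations taken from $\Gamma$ instead of $\SL$. Nothing further is needed.
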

Notice that $\Phi_{[\omega_1,\omega_2]}$ and $\Psi_{[\omega_1,\omega_2]}$ can be shown to be independent of the choice of the representative of the class $[\omega_1,\omega_2]$ in the same way as for \propref{indep} using transformations from $\Gamma$ instead of $\SL$.

 Let $M_2(\Gamma)$ denote the space of meromorphic weight two modular forms with
respect to $\Gamma$ and $Eq(\Gamma)$ be the set of $\Gamma-$equivariant functions, that is the set of meromorphic functions on $\fH$ which commute with the action of $\Gamma$. It is clear that the matrix $M_{(1,\tau)}$ of the previous section provides a bijection between $M_2(\Gamma)$  and $Eq(\Gamma)$. Using the fact that by definition of $\Omega_\Gamma$, when
$\displaystyle \gamma=\begin{bmatrix} a&b\\c&d\end{bmatrix}\in\Gamma$, then
$[\omega_1,\omega_2]=[a\omega_1+b\omega_2,c\omega_1+d\omega_2]$, we deduce, in the same way as in the previous sections, the following

\begin{thm}
	If $\Gamma$ is a finite index subgroup of $\SL$, then
	\begin{enumerate}
		\item The map
		\begin{equation} \label{map111}
		\cz\mapsto  \frac{H_{[1,\tau]}(\tau)}{H_{[1,\tau]}(1)}
		\end{equation}
		is a well defined map from the set of $\Gamma-$elliptic zeta functions to $Eq(\Gamma)$.
		\item
		The map
		\begin{equation}
		\label{map112}
		\cz\mapsto \frac{\Phi_{[1,\tau]}}{\Psi_{[1,\tau]}}
		\end{equation}
		is well defined  between the set of $\Gamma-$elliptic zeta functions and $M_2(\Gamma)$. It is also onto as for each $f\in M_2(\Gamma)$
	\[
	\cz([\omega_1,\omega_2],z)\,=\,\frac{1}{{\omega_1}^2}\;f\left(\frac{\omega_2}{\omega_1}\right)\;z+\zeta(z)
	\]
	is a $\Gamma-$elliptic zeta function of weight -1 that maps to $f$ by \eqref{map112}.
	\end{enumerate}
\end{thm}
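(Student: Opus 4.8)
The plan is to prove both statements by transporting the arguments of Sections~2--4 almost verbatim, replacing the identity of lattices used in the $\SL$ setting by the $\Gamma$--invariance of classes $[\omega_1,\omega_2]=[a\omega_1+b\omega_2,c\omega_1+d\omega_2]$ recorded just before the theorem. The only inputs will be the weight--$k$ homogeneity and $\BZ$--linearity of the quasi-period map $H_{[\omega_1,\omega_2]}$, together with the homogeneity (weights $k-1$ and $k+1$) and representative--independence of $\Phi_{[\omega_1,\omega_2]}$ and $\Psi_{[\omega_1,\omega_2]}$ established in the preceding proposition.

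For part (1), I would fix $\gamma=\begin{bmatrix}a&b\\c&d\end{bmatrix}\in\Gamma$ and set $h(\tau)=H_{[1,\tau]}(\tau)/H_{[1,\tau]}(1)$, which is meromorphic on $\fH$ by condition (3). To check equivariance I would imitate the proposition of Section~2: writing $\gamma\tau=\tfrac{a\tau+b}{c\tau+d}$ and pulling the factor $\alpha=c\tau+d$ out of both $H_{[1,\gamma\tau]}(\gamma\tau)$ and $H_{[1,\gamma\tau]}(1)$ by weight--$k$ homogeneity, the factors $(c\tau+d)^{-k}$ cancel in the quotient, leaving
\[
h(\gamma\tau)=\frac{H_{[c\tau+d,\,a\tau+b]}(a\tau+b)}{H_{[c\tau+d,\,a\tau+b]}(c\tau+d)}.
\]
I would then identify the class $[c\tau+d,a\tau+b]$ with $[1,\tau]$ in $\Omega_\Gamma$, so that $H_{[c\tau+d,a\tau+b]}=H_{[1,\tau]}$, and apply $\BZ$--linearity of $H_{[1,\tau]}$ on $\La_\tau$ to expand $H_{[1,\tau]}(a\tau+b)=aH_{[1,\tau]}(\tau)+bH_{[1,\tau]}(1)$ and likewise for $c\tau+d$. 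Dividing numerator and denominator by $H_{[1,\tau]}(1)$ gives $h(\gamma\tau)=\tfrac{ah(\tau)+b}{ch(\tau)+d}=\gamma h(\tau)$, so $h\in Eq(\Gamma)$; the map is well defined because $H$, hence $h$, depends only on $\cz$.

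For part (2), I would set $f(\tau)=\Phi_{[1,\tau]}/\Psi_{[1,\tau]}$ and run the same homogeneity bookkeeping as in \thmref{thm4.1}, now with weights $k-1$ and $k+1$, obtaining $\Phi_{[1,\gamma\tau]}=(c\tau+d)^{-k+1}\Phi_{[c\tau+d,a\tau+b]}$ and $\Psi_{[1,\gamma\tau]}=(c\tau+d)^{-k-1}\Psi_{[c\tau+d,a\tau+b]}$. After identifying $[c\tau+d,a\tau+b]$ with $[1,\tau]$ by representative--independence, the quotient yields $f(\gamma\tau)=(c\tau+d)^2f(\tau)$, so $f\in M_2(\Gamma)$, and the map is well defined since $\Phi$ and $\Psi$ are uniquely determined by $\cz$. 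For surjectivity I would take $f\in M_2(\Gamma)$ and verify that $\cz([\omega_1,\omega_2],z)=\omega_1^{-2}f(\omega_2/\omega_1)\,z+\zeta(z)$ is a genuine $\Gamma$--elliptic zeta function: it is quasi-periodic with $H_{[\omega_1,\omega_2]}(\omega)=\omega_1^{-2}f(\omega_2/\omega_1)\,\omega+\eta(\omega)$, homogeneous of weight $-1$, and has meromorphic quasi-periods; reading off $\Phi_{[1,\tau]}=f(\tau)$ and $\Psi_{[1,\tau]}=1$ shows it maps to $f$ under \eqref{map112}.

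The main obstacle is the single step common to both parts: after homogeneity one must know that $[c\tau+d,a\tau+b]=[1,\tau]$ in $\Omega_\Gamma$ (equivalently, that the explicit $\cz$ above is constant on $\Gamma$--classes). In the $\SL$ setting this was the automatic identity of lattices $(c\tau+d)\BZ+(a\tau+b)\BZ=\La_\tau$; for a proper subgroup it is no longer free, because the change of basis carrying $(1,\tau)$ to $(c\tau+d,a\tau+b)$ must itself belong to $\Gamma$. I would therefore concentrate the argument here, deriving this identification from the $\Gamma$--invariance of classes and from the representative--independence of $\Phi$, $\Psi$ and $H$ already established; this is precisely the point at which the finite-index structure of $\Gamma$ enters, while everything else is the routine transcription of the computations of the earlier sections.
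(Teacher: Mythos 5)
Your first two paragraphs reproduce the paper's proof exactly: the paper disposes of this theorem in one sentence, observing that by definition of $\Omega_\Gamma$ one has $[\omega_1,\omega_2]=[a\omega_1+b\omega_2,c\omega_1+d\omega_2]$ for $\gamma\in\Gamma$, and then declaring that the computations of Sections 2--4 (the homogeneity bookkeeping with $\alpha=c\tau+d$, the $\BZ$-linear expansion of $H_{[1,\tau]}$, the weights $k-1$ and $k+1$ for $\Phi$ and $\Psi$, and the explicit witness $\omega_1^{-2}f(\omega_2/\omega_1)\,z+\zeta(z)$ for surjectivity) transfer verbatim. So on the computational side your proposal and the paper agree completely.

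The problem is your final paragraph, in two respects. First, the identification $[c\tau+d,\,a\tau+b]=[1,\tau]$ is not something to be \emph{derived} from the representative-independence of $\Phi$, $\Psi$ and $H$: that independence says precisely that these quantities are well defined on classes, so invoking it presupposes the class identity you are trying to establish --- the proposed route is circular. In the paper the identification is simply an instance of the defining relation of the quotient $\Omega_\Gamma=\Gamma\backslash\BM$; there is no argument to concentrate on. Second, the genuine content hiding behind your worry is a convention check, not a proof: with the action literally written as $\gamma(\omega_1,\omega_2)=(\omega_1,\omega_2)\gamma^t=(a\omega_1+b\omega_2,\,c\omega_1+d\omega_2)$, the basis change carrying $(1,\tau)$ to $(c\tau+d,\,a\tau+b)$ is $\begin{bmatrix}d&c\\b&a\end{bmatrix}=\sigma\gamma\sigma$ with $\sigma=\begin{bmatrix}0&1\\1&0\end{bmatrix}$, and this lies in $\Gamma$ for every $\gamma\in\Gamma$ only when $\Gamma$ is stable under this outer conjugation (true for $\SL$ and $\Gamma(N)$, false for $\Gamma_0(N)$, whose conjugate is $\Gamma^0(N)$). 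The correct repair is a normalization, not a lemma: fix the ordering/action so that the defining relation of $\Omega_\Gamma$ reads $[1,\tau]=[c\tau+d,\,a\tau+b]$ for $\gamma=\begin{bmatrix}a&b\\c&d\end{bmatrix}\in\Gamma$ (equivalently, let $\gamma$ act on the left on the column $\begin{bmatrix}\omega_2\\ \omega_1\end{bmatrix}$), after which your first two paragraphs close the proof; otherwise the theorem holds verbatim with $\Gamma$ replaced by $\sigma\Gamma\sigma$. You correctly sensed that this step is where the subgroup structure enters, but the mechanism you propose to fill it would not work.
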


\begin{remark}\rm
	Using the bijection between $M_2(\G)$ and $Eq(\Gamma)$ and the surjective map \eqref{map112}, one can also see that the map  \eqref{map111} is surjective. Thus we have shown that each $\Gamma-$equivariant function arises from a $\Gamma-$elliptic zeta function. Notice that the trivial equivariant function $\tau$ is also the quotient of the quasi-periods of the trivial $\Gamma-$elliptic zeta function $\cz(z)=z$.
\end{remark}
\begin{remark}\rm It is worth explaining the behavior at the cusps as has been discussed at the end of \secref{section5} for the cusp at infinity. In the case of a modular subgroup $\G$ of $\SL$, there are more than one cusp which are not in the same $\Gamma-$orbit. Meanwhile, the analytic behavior of a meromorphic modular form at a rational cusp  is well defined, see Chapter 1 of  \cite{shimura} for instance, and that of an equivariant function has been established in \cite{sb2}, \S 3. It is not difficult to show that the two behaviors at a rational cusp are well preserved under the correspondence between a weight 2 modular form for $\Gamma$ and a $\G-$equivariant function when $\Gamma$ is a finite index subgroup of $\SL$.
	
\end{remark}

 \section{Examples} \label{section7}

In this section, we study an important class of elliptic functions given by integrals of the powers $\wp^n$ of the Weierstrass $\wp-$function. These integrals were treated in \cite{ss1}.

Let $\La=\omega_1\BZ+\omega_2\BZ $, $\Im(\omega_2/\omega_1)>0$, be a lattice in $\BC$. The Eisenstein series $g_2$ and $g_3$ are defined by
\[
g_2(\La)\,=\,60\;\sum_{\omega\in\La-\{0\}}\,\frac{1}{\omega^4}\;,
\]
\[
g_3(\La)\,=\,140\;\sum_{\omega\in\La-\{0\}}\,\frac{1}{\omega^4}\;.
\]
When $\Lambda=\BZ+\tau\BZ$, $\tau\in\fH$, $g_2$ and $g_3$, as functions of $\tau$ are modular forms of weight four and six respectively.
For a non-negative integer $n$, the power $\wp^n(z)$ can be written as a linear combination of 1, $\wp$ and successive derivatives of $\wp$:
\begin{equation}\label{pn}
\wp^n(\Lambda,z)\;=\;\Phi_n(\Lambda)-\Psi_n(\Lambda)\wp(\Lambda,z)+\sum_{k=1}^{n-1}\,\alpha_k\wp^{(2k)},
\end{equation}
where  the coefficients $\alpha_k$ are polynomials in $g_2$ and $g_3$
with rational coefficients, see \cite{tan-molk}, page 108. In particular, $\Phi_0=1$, $\Psi_0=0$, $\Phi_1=0$ and $\Psi_1=-1$.

For each lattice $\Lambda$ and $z\in\BC$, a primitive $\displaystyle \int \wp^n(u) du$ of $\wp^n$ has the form
\begin{equation}
\Phi_n(\Lambda)\;z + \Psi_n(\Lambda)\zeta(\Lambda,z)+E_n(\Lambda, z),
\end{equation}
where for each $\Lambda$,  $E_n(\Lambda,z)$ is a $\Lambda-$elliptic function. We define
\[
\cz_n(\Lambda,z)\;:=\;\Phi_n(\Lambda)\;z + \Psi_n(\Lambda)\zeta(\Lambda,z).
\]
It is clear that for each $\Lambda$, $\cz_n(\Lambda,z)$ is quasi-periodic with the quasi-period map given by
\[
H_n(\omega)=\Phi_n(\Lambda) \;\omega +\Psi_n(\Lambda)\eta(\omega),
\]
where $\eta$ is the quasi-period map for the Weierstrass $\zeta-$function.
If there is no confusion, we will write $\Phi_n$ for $\Phi_n(\Lambda)$ and $\Psi_n$ for $\Psi_n(\Lambda)$. According to \cite{tan-molk}, page 109 (see also \cite{ss1}, \S9), $\Phi_n$, $\Psi_n$, and thus $H_n$ satisfy the same three-term recurrence relation
\[
u_{n+1}=\frac{2n-1}{4(2n+1)}\;g_2u_{n-1} +\frac{n-1}{2(2n+1)}\;g_3u_{n-2},
\]
with the following initial conditions
\begin{align*}
	&\Phi_{-1}=\Psi_{-1}=H_{-1}=0\\
	&\Phi_0=1\,,\ \Psi_0=0\,,\ H_0(\omega)=\omega\\
	&\Phi_1=0\,,\ \Psi_1=-1\,,\ H_1(\omega)=-\eta(\omega).
	\end{align*}
One can easily see that when $\Lambda=\Lambda_{\tau}=\BZ+\tau\BZ$, $\tau\in\fH$,  $H_n$ is polynomial in $g_2$, $g_3$, $\eta(1)$ and $\eta(\tau)$, and thus $H_n(1)$ and $H_n(\tau)$ are meromorphic functions of $\tau$. It follows that the map $\cz_n(\Lambda,z)$ satisfies the axioms of a Weierstrass elliptic zeta function of weight $-2n+1$.

Let us put $\Phi_n(\tau)=\Phi_n(\Lambda_{\tau})$, $\Psi_n(\tau)=\Psi_n(\Lambda_{\tau})$, $\eta_1=\eta(1)$ and $\eta_2=\eta(\tau)$. Then $\displaystyle \Phi_2(\tau)=\frac{1}{12}\;g_2(\tau)$, $\displaystyle \Phi_3(\tau)=\frac{1}{10}\;g_3(\tau)$, $\displaystyle \Psi_2(\tau)=0$ and
$\displaystyle \Psi_3(\tau)=\frac{-3}{20}\;g_2(\tau)$. More generally, one can show by induction that
\begin{prop}
For each positive integer $n$, $\Phi_n$ and $\Psi_n$ are weighted homogeneous polynomials in $g_2$ and $g_3$ with rational coefficients and of degrees $2n$ and $2(n-1)$ respectively, and these degrees are also their weights as holomorphic modular forms.
\end{prop}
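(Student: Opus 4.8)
The plan is to argue by strong induction on $n$, using the three-term recurrence $u_{n+1}=\frac{2n-1}{4(2n+1)}g_2u_{n-1}+\frac{n-1}{2(2n+1)}g_3u_{n-2}$ satisfied by both $\Phi_n$ and $\Psi_n$. First I would fix once and for all the weighting in which the variable $g_2$ is given degree $4$ and $g_3$ is given degree $6$; with this convention a monomial $g_2^ag_3^b$ has degree $4a+6b$, and the assertion becomes that $\Phi_n$ is weighted homogeneous of degree $2n$ and $\Psi_n$ of degree $2(n-1)$, with coefficients in $\BQ$. The reason for choosing precisely these degrees is that $g_2$ and $g_3$ are holomorphic modular forms of weights $4$ and $6$, so a weighted homogeneous polynomial of degree $d$ in them is automatically a holomorphic modular form of weight $d$; the final clause of the statement will then be immediate once the homogeneity is established.

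Next I would dispatch the base of the induction. Because the recurrence expresses $u_{n+1}$ in terms of $u_{n-1}$ and $u_{n-2}$ (and not $u_n$), the induction must start from the three initial data $u_{-1}$, $u_0$, $u_1$. For these the assertion is checked directly: $\Phi_{-1}=\Psi_{-1}=0$ and $\Psi_0=\Phi_1=0$ are the zero polynomial, which is vacuously weighted homogeneous of any degree, while $\Phi_0=1$ and $\Psi_1=-1$ are nonzero constants of degree $0$, matching the predicted degrees $2\cdot 0=0$ and $2(1-1)=0$. As a sanity check I would also record $n=2,3$, recovering $\Phi_2=\frac{1}{12}g_2$, $\Psi_2=0$, $\Phi_3=\frac{1}{10}g_3$ and $\Psi_3=-\frac{3}{20}g_2$ straight from the recurrence.

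For the inductive step I would assume the claim for all indices up to $n$ and prove it for $n+1$. The coefficients $\frac{2n-1}{4(2n+1)}$ and $\frac{n-1}{2(2n+1)}$ lie in $\BQ$, and $g_2,g_3$ carry degrees $4,6$, so the two summands $g_2u_{n-1}$ and $g_3u_{n-2}$ are weighted homogeneous with rational coefficients; it remains only to see that they share the same degree, equal to the predicted degree of $u_{n+1}$. For $\Phi$ this is the identity $4+2(n-1)=6+2(n-2)=2(n+1)$, and for $\Psi$ it is $4+2(n-2)=6+2(n-3)=2n=2\big((n+1)-1\big)$, where I use the inductive degrees $2(n-1)$, $2(n-2)$ of $\Phi_{n-1},\Phi_{n-2}$ and $2(n-2)$, $2(n-3)$ of $\Psi_{n-1},\Psi_{n-2}$. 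Since a sum of weighted homogeneous polynomials of the same degree is again weighted homogeneous of that degree, $\Phi_{n+1}$ and $\Psi_{n+1}$ acquire the asserted degrees with rational coefficients.

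The only delicate point is this weight bookkeeping: the argument hinges entirely on the two terms $g_2u_{n-1}$ and $g_3u_{n-2}$ landing on exactly the same degree, which is what lets the recurrence preserve homogeneity; the index shift in the recurrence (depending on $n-1$ and $n-2$ rather than on consecutive terms) is precisely what forces a strong induction with the three base levels above. With homogeneity in hand the modular-weight statement is free, since $\Phi_n$ is then assembled by products and sums from the weight-$4$ form $g_2$ and the weight-$6$ form $g_3$ into total degree $2n$, hence is a holomorphic modular form of weight $2n$, and likewise $\Psi_n$ is a holomorphic modular form of weight $2(n-1)$.
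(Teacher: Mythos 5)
Your proof is correct and is precisely the argument the paper has in mind: the paper states this result with only the remark that ``one can show by induction,'' and your strong induction on the three-term recurrence, with base cases $n=-1,0,1$ and the degree bookkeeping $4+2(n-1)=6+2(n-2)=2(n+1)$ for $\Phi$ and $4+2(n-2)=6+2(n-3)=2n$ for $\Psi$ under the grading $\deg g_2=4$, $\deg g_3=6$, is exactly the intended fleshing-out. Your handling of the zero polynomials among the initial data and the observation that weighted homogeneity of degree $d$ in $g_2,g_3$ immediately yields a holomorphic modular form of weight $d$ complete the statement's final clause correctly.
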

For small weights, it is clear that $\Phi_n$ and $\Psi_n$ are simple monomials.
In light of \S4 and \S5, for each elliptic zeta function $\cz_n$, there correspond, on one hand, a weight two modular form
\[
 f_n(\tau)=\frac{\Phi_n(\tau)}{\Psi_n(\tau)},
\]
which is a rational function of $g_2$ and $g_3$ with rational coefficients, and on the other hand, an equivariant function
\[
h_n(\tau)=\frac{H_n(\tau)}{H_n(1)}=\frac{\Phi_n(\tau)\;\tau+\Psi_n(\tau)\eta_2}{\Phi_n(\tau)+\Psi_n(\tau)\eta_1}.
\]
Also, using the Legendre relation, $f_n$ and $h_n$ are related by
\[
h_n(\tau)\;=\;\tau+\frac{2\pi i}{f_n(\tau) +\eta_1}.
\]
The following table gives $\Phi_n$, $\Psi_n$, $f_n$ and $h_n$ for $1\leq n\leq6$

\begin{center}
$$\displaystyle	\begin{array}{ |c|c|c|c|c| }
		\hline
		&&&&\\
	n & \Phi_n &\Psi_n & f_n & h_n\\
		&&&&\\
	\hline
		&&&&\\
	1&0&-1&0&\frac{\eta_2}{\eta_1}=\tau+\frac{2\pi i}{\eta_1}\\
		&&&&\\
	2&\frac{1}{12}\;g_2&0&-&\tau\\
		&&&&\\
	3&\frac{1}{10}\;g_3&\frac{-3}{20}\;g_2&\frac{-2}{3}\;\frac{g_3}{g_2}&\tau+\frac{6\pi ig_2}{-2g_3+3g_2\eta_1}\\
		&&&&\\
	4&\frac{5}{336}\;g_2^2&\frac{-2}{14}\;g_3&\frac{-5}{48}\;\frac{g_2^2}{g_3}&
	\tau+\frac{96\pi i g_3}{-5g_2^2+48g_3\eta_1}\\
		&&&&\\
	5&\frac{1}{30}\;g_2g_3&\frac{-7}{240}\;g_2^2&\frac{-8}{7}\;\frac{g_3}{g_2}&
	\tau+\frac{14\pi ig_2}{-8g_3+7g_2\eta_1}\\
		&&&&\\
	6&\frac{15}{4928}\;g_2^3+\frac{1}{55}\;g_3^2&\frac{-87}{1540}\;g_2g_3&\frac{-25}{464}\;\frac{g_2^2}{g_3}-\frac{28}{87}\;\frac{g_3}{g_2}&\tau+\frac{2784\pi ig_2g_3}{-75g_2^3-448g_3^2+1392g_2g_3\eta_1}\\
		&&&&\\
		\hline
	\end{array}
$$
\end{center}



\begin{thebibliography}{aaaa}
\bibitem{brady} M. Brady, Meromorphic solutions of a system of functional equations involving the modular group. Proc. AMS. Vol. {\bf 30}, no. 2,  (1970) 271--277.
\bibitem{con} J. H. Conway; Understanding groups like $\Gamma_0(N)$. {\em  Groups, difference sets, and the Monster} (Columbus, OH, 1993), 327–343,
Ohio State Univ. Math. Res. Inst. Publ., 4, de Gruyter, Berlin, 1996.
\bibitem{sb1} A. Elbasraoui; A. Sebbar. Equivariant forms: Structure and geometry. Canad. Math. Bull. Vol. {\bf 56} (3), (2013) 520--533.
\bibitem{sb2} A. Elbasraoui; A. Sebbar. Rational equivariant forms. Int. J. Number Th. 08  No. 4(2012), 963--981.
\bibitem{lang} S. Lang;  Elliptic functions.  Second edition. Graduate Texts in Mathematics, 112. Springer-Verlag, New York, 1987.
\bibitem{ss3} A. Sebbar; H. Saber. On the critical points of modular forms.  J. Number Theory 132 (2012), no. 8, 1780–1787.
\bibitem{ss2} A. Sebbar; H. Saber. Equivariant functions and vector-valued modular forms. Int. J. Number Theory 10 (2014), no. 4, 949-–954.
\bibitem{ss1} A. Sebbar; A. Sebbar. Equivariant functions and integrals of elliptic functions. Geom. Dedicata {\bf 160} (1), (2012) 37--414.
\bibitem{shimura} G. Shimura; Introduction to the Arithmethic Theory of Automorphic Functions, Princeton University Press, Princeton, New Jersey, 1971.
\bibitem{tan-molk} J. Tannery; J. Molk,. El\'ements de la th\'eorie des fonctions elliptiques. Gauthier-Villars, Paris (1990).
\end{thebibliography}
\end{document}